\documentclass[A4paper,10pt]{amsart}
\usepackage[text={6.299213in,8.267717in},centering]{geometry}

\usepackage[french,main=english]{babel}
\usepackage{times, newtxmath,enumerate,dsfont} 
\usepackage{mathtools}
\usepackage{multirow}
\usepackage{graphicx}	
\usepackage{float}
\restylefloat{table} 
\usepackage{etoolbox}

\newcommand{\inv}{^{\raisebox{.2ex}{$\scriptscriptstyle-1$}}}   
  
\newcommand{\m}{M}
\newcommand{\C}{\mathcal{C}}
\newcommand{\dm}{\mathcal{D}(M)}
\newcommand{\sm}{\mathcal{S}(M)}

\usepackage[dvipsnames]{xcolor}   
\usepackage{xparse}
\usepackage{xr-hyper}
\usepackage[linktocpage=true,colorlinks=true,hyperindex,citecolor=RoyalBlue,linkcolor= RoyalBlue]{hyperref}   

\theoremstyle{plain}
\newtheorem{theorem}{Theorem}[section] 
\newtheorem{proposition}[theorem]{Proposition}
\newtheorem{lemma}[theorem]{Lemma}
\newtheorem{corollary}[theorem]{Corollary}
\theoremstyle{definition}
\newtheorem{definition}[theorem]{Definition}

\numberwithin{equation}{section}
\newtheorem{cremark}[theorem]{Concluding Remarks}

\begin{document}
\author{A. Goswami}
\address{
[1] Department of Mathematics and Applied Mathematics, University of Johannesburg, P.O. Box 524, Auckland Park 2006, South Africa. [2] National Institute for Theoretical and Computational Sciences (NITheCS), South Africa.}
\email{agoswami@uj.ac.za}

\title{Topological properties of some  classes  of submodules}

\subjclass{54F65 }


\keywords{module, extraordinary submodule, spectral space, quasi-compact.}

\begin{abstract}
We study the topology of a class of proper submodules and some of its distinguished subclasses and call them structure spaces. We give several criteria for the quasi-compactness of these structure spaces. We study $T_0$ and $T_1$ separation properties and characterize structure spaces in which nonempty irreducible closed subsets have unique generic points. We provide a sufficient condition for the connectedness of structure spaces. We prove that the structure spaces of proper submodules are spectral, and moreover, we characterize the spectral structure spaces of Noetherian modules. Finally, we discuss continuous functions between these spaces.
\end{abstract}
\maketitle

\section{Introduction and Preliminaries}

Since the introduction of prime submodules in \cite{D78}, much attention has been drawn to studying this class (see \cite{L12, L84, L95}). Following the existence of various classes of ideals of rings, it is natural to introduce similar classes of submodules, and hence we now have notions like maximal, prime, semiprime, primary, strongly irreducible, cyclic, and finitely generated submodules. As far as studying topologies is concerned, the main choice of class is prime submodules or its subclass of minimal prime submodules. However, it has been immediately observed that unlike prime ideals of a (commutative) ring, one cannot endow a Zariski topology on the class of prime submodules of a module. To make closed sets behave well with respect to finite unions, we need an additional condition on the module, and hence what we obtain is called a top module (see \cite{Z06}). An extensive study of various topologies on prime spectra (or their variations like minimal prime, coprime, fully prime) of submodules can be found in \cite{A11, A15, A10, AF14,AH12, AH19, AKF16, ZT00, AH11, AP17, BH08, L99, MMS97, MS06, HP19, T09, UTS15}.  

An attempt has been made in \cite{DG22} to obtain a unified approach to studying the topological properties of various classes of ideals of (commutative) rings, which is further carried on in \cite{FGS23} (see also \cite{G23}) to obtain spectral spaces out of these classes of ideals. These topological spaces were called ideal spaces. The goal of the present paper is twofold. First, we want to gain a better understanding of the properties of the ideal spaces of rings that also hold for modules, and second, we want to do this in a setting that allows us to handle wider classes of submodules that are not necessarily of ``prime type." These classes of submodules endowed with a topology shall be called structure spaces. Our topology coincides with Zariski topology when we consider the spectra of prime submodules of top modules. Although we have picked up some distinguished classes of submodules (see Definition \ref{dcsm}), our theory is in fact applicable for any subclass of the class of proper submodules. 

In this paper, by a module $\m$, we shall always mean an $R$-module, where  $R$ is a commutative ring with identity. A nonempty subset $N$ of a module $\m$ is called a \emph{submodule} of $\m$, if $N$ is closed under addition and scalar multiplication. A \emph{proper} submodule $N$ is such that $N\neq \m.$ If $N$ is a submodule of a module $\m$, then the \emph{annihilator ideal of $\m/N$ in $R$} is defined by \[(N : \m)=\{r\in R\mid r\m\subseteq N\}.
\] 
We now recall definitions of some distinguished subclasses of the class of proper submodules of a module. 

\smallskip
\begin{definition}\label{dcsm}
Let $\m$ be a module and let $N$ be a proper submodule of $\m$. Then $N$ is called
\begin{enumerate}
	
\item [$\bullet$] \emph{maximal} if there is no other proper submodule containing $N$;

\item[$\bullet$]  \emph{prime} \cite{D78} if $rm \in N$ with
$r \in R$ and $m \in \m$ imply that either $m \in N$ or $ r \in (N : M);$

\item[$\bullet$] \emph{semiprime} \cite{D78} if $N$ is an intersection of prime submodules of $M$;

\item[$\bullet$] \emph{extraordinary} \cite{MMS97} if $L$ and $K$ are semiprime submodules of $M$ with $L\cap K\subseteq N$, then either $L\subseteq N$ or $K\subseteq N$;

\item[$\bullet$] \emph{primary} \cite{AM69} if $rm \in N$ with
$r \in R$ and $m \in \m$ imply that either  $r^n \in (N : \m)$ for some $n\in \mathds{N}$, or $m \in N$;

\item[$\bullet$] \emph{radical} if
\[N=\sqrt{N}=\bigcap_{ N \subseteq P}\{P\mid P\;\text{is a prime submodule of}\;\m\};\]

\item[$\bullet$] \emph{strongly irreducible} \cite{KES06} if $L\cap K\subseteq N$ implies either $L\subseteq N$ or $K\subseteq N,$ for all submodules $L, K\in M$;

\item[$\bullet$] \emph{irreducible} \cite{M86} if $N\neq N_1\cap N_2$ for two submodules $N_1$ and $N_2$ of $\m$ with $N_i\neq N$;

\item[$\bullet$] \emph{completely irreducible} \cite{FHO06} if $N\neq \bigcap_{\lambda \in \Lambda}N_{\lambda}$ for submodules $\{N_{\lambda}\}_{\lambda\in \Lambda}N_1$  of $\m$ with $N_{\lambda}\neq N$ for all $\lambda \in \Lambda$;

\item[$\bullet$] \emph{minimal} \cite{AP17}  if $N\neq 0$ and $N$ contains no other nonzero submodules of $\m$;

\item[$\bullet$]  \emph{minimal prime} \cite{MS93} if $N$ is both a minimal and a prime submodule of $\m$.

\item[$\bullet$] \emph{cyclic} if $N$ is generated by a single  element of $\m$;

\item[$\bullet$] \emph{finitely generated} if $N$ is generated by a finite subset of $\m$.
\end{enumerate}
\end{definition}

Before we discuss topologies on distinguished classes of submodules in the next section, we first need some notation. By $\mathcal{S}(\m),$ we shall denote the set of all submodules of a module $M$. To denote the class of proper submodules of $M$ or any one of its  distinguished subclasses listed in Definition \ref{dcsm}, we shall use the notation $\mathcal{D}(M)$. It is clear that $M\notin \mathcal{D}(M)$ for all such classes of submodules. 

\smallskip
\section{Structure spaces}\label{ssms}

Our goal is to introduce a closure operation $\C$ on a distinguished class of submodules of $M$ in such a way that the closure operation $\C$ induces a topology on that class of submodules. The closure $\mathcal{C}$ is defined on a $\mathcal{D}(M)$ as follows.
\begin{equation}\label{vsl}
\mathcal{C}(N)=\{L\in \mathcal{D}(M)\mid S\subseteq L\}\qquad (N\in \mathcal{S}(M)).	
\end{equation} 

In the following lemma, we gather some elementary properties of $\C$ needed in the sequel. Since the proofs of these results are straightforward, we skip them.

\smallskip
\begin{lemma}\label{bpvs}
Let $M$ be a module. A closure operation $\mathcal{C}$ on a $\mathcal{D}(M)$ has the following properties.
\begin{enumerate}
	
\item  For any two $N, N' \in \dm $ with $N\subseteq N'$ imply that $\mathcal{C}(N)\supseteq \mathcal{C}(N');$
	
\item  \label{a} $\mathcal{C}(0)=\mathcal{D}(M)$ and $\mathcal{C}(M)=\emptyset;$

\item \label{b} $\bigcap_{\lambda\in \Lambda}\mathcal{C}(N_{\lambda})=\mathcal{C}\left( \sum_{\lambda \in \Lambda} N_{\lambda}\right)$ for all $N_{\lambda}\in \mathcal{S}(M)$ and for all $\lambda\in \Lambda ;$

\item\label{vnuvn} $\mathcal{C}(N)\cup \mathcal{C}(N') \subseteq \mathcal{C}(N\cap N')$ for all $N,$ $N'\in \mathcal{S}(M);$

\item\label{vnsvn} $\mathcal{C}(N)\supseteq \mathcal{C}(\sqrt{N})$ for all $N\in \sm$.
\end{enumerate}
\end{lemma}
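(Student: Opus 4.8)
The plan is to derive all five items directly from the definition \eqref{vsl}, reading $\mathcal{C}(N)$ as the collection of members of $\dm$ that contain $N$; each assertion then becomes a one-line consequence of the order structure on submodules. I would first isolate the antitonicity underlying (1): if $N\subseteq N'$ and $L\in\mathcal{C}(N')$, then $N\subseteq N'\subseteq L$, so $L\in\mathcal{C}(N)$, giving $\mathcal{C}(N)\supseteq\mathcal{C}(N')$. I would stress that this argument uses nothing about $N,N'$ beyond the inclusion $N\subseteq N'$, so it is valid for arbitrary $N,N'\in\sm$ and not only for members of $\dm$; I will reuse this extended form in (5).

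For (2), the inclusion $0\subseteq L$ holds for every submodule, so $\mathcal{C}(0)=\dm$; for $\mathcal{C}(M)$ the decisive point is that every $L\in\dm$ is proper, whence $L\subsetneq M$, so $M\not\subseteq L$ and $\mathcal{C}(M)=\emptyset$. This is the single place where the standing assumption $M\notin\dm$ is genuinely used, and I would flag it explicitly. For (3) I would argue by double inclusion through the universal property of the sum: $L\in\bigcap_{\lambda}\mathcal{C}(N_{\lambda})$ means $N_{\lambda}\subseteq L$ for every $\lambda$, which holds if and only if the smallest submodule containing all the $N_{\lambda}$, namely $\sum_{\lambda}N_{\lambda}$, is contained in $L$; that is precisely $L\in\mathcal{C}\bigl(\sum_{\lambda}N_{\lambda}\bigr)$.

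Items (4) and (5) then reduce to antitonicity. For (4), from $N\cap N'\subseteq N$ and $N\cap N'\subseteq N'$ I obtain $\mathcal{C}(N)\subseteq\mathcal{C}(N\cap N')$ and $\mathcal{C}(N')\subseteq\mathcal{C}(N\cap N')$, whence the union is contained in $\mathcal{C}(N\cap N')$. For (5), since $N\subseteq\sqrt{N}$ as submodules of $M$, the extended antitonicity from the first step yields $\mathcal{C}(\sqrt{N})\subseteq\mathcal{C}(N)$, which is the claimed $\mathcal{C}(N)\supseteq\mathcal{C}(\sqrt{N})$.

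I do not expect a genuine obstacle, since each clause is forced by the definition; the only subtleties are conceptual rather than technical. The first is the necessity of properness in (2), noted above. The second is that (4) is asserted only as an inclusion: equality $\mathcal{C}(N)\cup\mathcal{C}(N')=\mathcal{C}(N\cap N')$ can fail for general modules, and it is exactly the additional hypothesis securing this equality (the top-module condition recalled in the introduction) that turns the sets $\mathcal{C}(N)$ into the closed sets of a topology. I would therefore be careful not to overstate (4), and to keep the \emph{sum}, rather than the intersection, as the operation interacting cleanly with $\mathcal{C}$ in (3).
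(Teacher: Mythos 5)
Your proposal is correct, and it is precisely the routine verification from the definition of $\mathcal{C}$ that the paper has in mind when it states ``Since the proofs of these results are straightforward, we skip them.'' All five items check out, including the two points worth being careful about: the properness of members of $\dm$ in (2), and the extension of antitonicity to all of $\sm$ (not just $\dm$) needed for (4) and (5).
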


In general, the sets $\{\mathcal{C}(N)\}_{N\in \sm}$ are not  closed under finite unions, and hence we may not have equality in Lemma \ref{bpvs}(\ref{vnuvn}). However, if  for any two $N, N'\in \sm$, there exists an $N''\in \sm$ such that $\C(N)\cup \C(N')=\C(N''),$ then obviously we obtain a equality in Lemma \ref{bpvs}(\ref{vnuvn}). A module with this property is called a \emph{top} module.  (see \cite[\textsection 2]{MMS97}). Therefore, in a top module, the collection $\{\mathcal{C}(N)\}_{N\in \sm}$ of subsets of any class $\dm$ of submodules induces the Zariski topology on $\dm$. It is clear from this discussion that unless $M$ is a top module, even for the class of prime submodules, the collection $\{\mathcal{C}(N)\}_{N\in \sm}$ does not induce the Zariski topology on it. This is the major difference in behaviour in topology endowed on ideals of a ring and on submodules of a module. 

For an arbitrary module $M$, using these subsets  of a distinguished class $\dm$ of submodules, we wish to construct a topology on $\mathcal{D}(M)$. 
Note that from Lemma \ref{bpvs}(\ref{a}), it follows that \[\bigcap_{N\in \mathcal{S}(M)} \mathcal{C}(N)=\emptyset,\] and hence by \cite[Theorem\,15\,A.13., p.\,254]{C66},  the collection $\{\mathcal{C}(N)\}_{N\in \mathcal{S}(M)}$ as  a closed subbasis generates a unique topology on  $\mathcal{D}(M)$. We denote this topology by $\tau_{\mathcal{D}(M)}$.  For the topological space $(\mathcal{D}(M), \tau_{\mathcal{D}(M)})$, we will write $\dm$ and call this topological space a \emph{structure space}.

\subsection{Compactness}

Next, we are interested in studying the quasi-compactness of a structure space, $\mathcal{D}(M)$. Although, from Lemma \ref{bpvs}(\ref{a}), we have $N=M$ implies that $\C(N)=\emptyset$, but the converse may not be true for all distinguished classes of submodules, and what we want is to use this converse property to prove the quasi-compactness of structure spaces. However, if $M$ is finitely generated, then the converse holds for any $\mathcal{D}(M)$.
Since our topology is generated by subbasis closed sets, in the proof of the following theorem we shall rely on the Alexander Subbasis Theorem. Note that our result generalizes Theorem 2.13 of \cite{ZT00}.

\smallskip
\begin{theorem}\label{comp} 
If $M$ is a finitely generated module, then every structure space is quasi-compact. 
\end{theorem}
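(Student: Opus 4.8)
The plan is to apply the Alexander Subbasis Theorem, as the author hints. Since $\{\mathcal{C}(N)\}_{N\in\mathcal{S}(M)}$ is a closed subbasis for $\tau_{\mathcal{D}(M)}$, it suffices to show that every cover of $\mathcal{D}(M)$ by complements of subbasic closed sets admits a finite subcover; equivalently, working with closed sets directly, I would show that every subfamily $\{\mathcal{C}(N_\lambda)\}_{\lambda\in\Lambda}$ of subbasic closed sets with the finite intersection property has nonempty total intersection. So suppose $\{\mathcal{C}(N_\lambda)\}_{\lambda\in\Lambda}$ has the finite intersection property; I must produce some $L\in\mathcal{D}(M)$ lying in $\mathcal{C}(N_\lambda)$ for every $\lambda$.

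First I would translate the finite intersection property into module-theoretic language via Lemma~\ref{bpvs}(\ref{b}): for any finite subset $\{\lambda_1,\dots,\lambda_k\}\subseteq\Lambda$ we have $\bigcap_{i=1}^k\mathcal{C}(N_{\lambda_i})=\mathcal{C}\!\left(\sum_{i=1}^k N_{\lambda_i}\right)$, and this is nonempty. By Lemma~\ref{bpvs}(\ref{a}) together with the key converse property singled out in the paragraph preceding the theorem, $\mathcal{C}(N)=\emptyset$ forces $N=M$ whenever $M$ is finitely generated; contrapositively, $\mathcal{C}\!\left(\sum_{i=1}^k N_{\lambda_i}\right)\neq\emptyset$ tells me that each finite sum $\sum_{i=1}^k N_{\lambda_i}$ is a \emph{proper} submodule of $M$. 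Thus the finite intersection property is exactly the statement that no finite partial sum of the $N_\lambda$ exhausts $M$.

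Next I would pass to the full sum $N:=\sum_{\lambda\in\Lambda}N_\lambda$ and claim it is still proper. This is the crux where finite generation does the real work: if $N=M$, then since $M$ is generated by finitely many elements $m_1,\dots,m_n$, each generator $m_j$ lies in some finite subsum, and collecting the finitely many indices involved shows $M=\sum_{i=1}^k N_{\lambda_i}$ for a single finite subfamily — contradicting the previous paragraph. Hence $N$ is proper, so $\mathcal{C}(N)\neq\emptyset$ by the converse property again, and picking any $L\in\mathcal{C}(N)$ gives, via Lemma~\ref{bpvs}(\ref{b}) in its infinitary form $\bigcap_{\lambda}\mathcal{C}(N_\lambda)=\mathcal{C}\!\left(\sum_\lambda N_\lambda\right)=\mathcal{C}(N)$, a point $L$ in every $\mathcal{C}(N_\lambda)$. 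This completes the finite-intersection verification.

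The main obstacle I anticipate is a subtle one about the role of the distinguished class $\mathcal{D}(M)$. The converse property ``$\mathcal{C}(N)=\emptyset\Rightarrow N=M$'' must be justified uniformly across \emph{all} the distinguished subclasses in Definition~\ref{dcsm}, not merely for proper submodules in general; I would need to confirm that for a finitely generated $M$ any proper submodule $N$ is contained in some member of the chosen class $\mathcal{D}(M)$, so that $\mathcal{C}(N)\neq\emptyset$. For the class of all proper submodules this is immediate (take $L=N$ itself, or any maximal submodule above it, which exists by finite generation and Zorn's lemma), and for classes like maximal or prime submodules the existence of a maximal submodule over $N$ — guaranteed precisely because $M$ is finitely generated — supplies the required $L$. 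I would make sure the Alexander Subbasis Theorem is invoked in its correct contrapositive form (covers by open sets versus the finite intersection property on the complementary closed subbasis) and that the infinitary identity in Lemma~\ref{bpvs}(\ref{b}) is applied to the genuinely infinite index set $\Lambda$, which the lemma as stated permits.
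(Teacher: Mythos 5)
Your proof is correct and takes essentially the same route as the paper's: both invoke the Alexander Subbasis Theorem, convert intersections of subbasic closed sets into $\mathcal{C}$ of sums via Lemma~\ref{bpvs}(\ref{b}), use the converse property ($\mathcal{C}(N)=\emptyset$ implies $N=M$) supplied by finite generation, and reduce the full sum to a finite subsum by tracking the finitely many generators of $M$; you merely argue in the direct finite-intersection-property form where the paper argues contrapositively (empty total intersection forces an empty finite subintersection). Your closing caveat, that the converse property needs to be checked uniformly across all the distinguished classes of Definition~\ref{dcsm}, is a point the paper itself asserts without proof, so your treatment is, if anything, the more careful one.
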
 

\begin{proof}   
To show the compactness, thanks to Alexander Subbasis Theorem, it suffices to prove the finite intersection property only for subbasis closed sets. Let $\bigcap_{\lambda\in \Lambda}K_{ \lambda}=\emptyset$ for a family  $\{K_{ \lambda}\}_{\lambda \in \Lambda}$ of subbasis closed sets of a structure space $\mathcal{D}(M)$. We claim that $\bigcap_{ i\,=1}^{ n}K_{ \lambda_i}=\emptyset,$ for some finite subset $\{\lambda_1, \ldots, \lambda_n\}$ of $\Lambda$. Let $\{N_{ \lambda}\}_{\lambda \in \Lambda}\in \sm$  such  that $K_{ \lambda}=\mathcal{C}(N_{ \lambda})$ for all $\lambda \in \Lambda,$  By Lemma \ref{bpvs}(\ref{b}), this implies that $\mathcal{C}\left(\sum_{\lambda \in \Lambda}N_{ \lambda}\right)=\emptyset.$ Since $M$ is finitely generated, $ \sum_{\lambda \in \Lambda}N_{ \lambda}=M$, and also there exists a finite subset of $\Lambda$ for which this equality holds.
\end{proof}  

Another sufficient condition for the quasi-compactness of a structure space $\dm$ is the inclusion of all maximal submodules of $M$ in $\dm$. However, a module may not have any maximal submodules, which can be guaranteed to exist if we consider a module over a perfect ring (see \cite{B60}). Note that the basic structure of argument in the proof of the following theorem is similar to that of Theorem \ref{comp}.

\smallskip
\begin{theorem}\label{csb}
If $M$ is a module over a perfect ring and if a class $\dm$ of submodules of $M$ contains all maximal submodules of $M$, then the structure space $\dm$ is  quasi-compact. 
\end{theorem}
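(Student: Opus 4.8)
The plan is to mirror the proof of Theorem~\ref{comp}, with the perfect-ring hypothesis taking over the role played there by finite generation. Invoking the Alexander Subbasis Theorem, it suffices to establish the finite intersection property for subbasic closed sets: starting from a family $\{N_\lambda\}_{\lambda\in\Lambda}\subseteq\sm$ with $\bigcap_{\lambda\in\Lambda}\C(N_\lambda)=\emptyset$, I must exhibit a finite subfamily whose intersection is already empty. By Lemma~\ref{bpvs}(\ref{b}) the hypothesis rewrites as $\C\bigl(\sum_{\lambda\in\Lambda}N_\lambda\bigr)=\emptyset$, exactly as in Theorem~\ref{comp}.

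The first place the new hypotheses intervene is in supplying the converse of Lemma~\ref{bpvs}(\ref{a}). Over a perfect ring the Jacobson radical $J(R)$ is $T$-nilpotent, so Nakayama's lemma persists for arbitrary modules: $J(R)M'=M'$ forces $M'=0$. Hence every nonzero module has a maximal submodule; applying this to the nonzero quotient $M/N$ attached to any proper submodule $N$ shows that every proper submodule of $M$ is contained in a maximal one (this is the content of \cite{B60} alluded to in the text). As $\dm$ contains all maximal submodules of $M$, the equality $\C(N)=\emptyset$ can hold only if no maximal submodule contains $N$, which now forces $N=M$. Applying this to $N=\sum_{\lambda\in\Lambda}N_\lambda$ gives $\sum_{\lambda\in\Lambda}N_\lambda=M$.

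The final and, I expect, genuinely hard step is the passage from $\sum_{\lambda\in\Lambda}N_\lambda=M$ to $\sum_{i=1}^nN_{\lambda_i}=M$ for some finite subfamily; only then does the converse just established yield $\bigcap_{i=1}^n\C(N_{\lambda_i})=\emptyset$. In Theorem~\ref{comp} this extraction was free, since a finite generating set of $M$ lives inside finitely many $N_\lambda$. Absent finite generation, the cleanest reformulation I see is that the subspace of $\dm$ consisting of the maximal submodules must itself be quasi-compact: the closed sets $\C(N_\lambda)$ meet this subspace in a family with empty total intersection, and a finite subfamily with empty intersection there is precisely what is required. A natural attempt is to descend to the semisimple quotient $M/\mathrm{rad}(M)$, where $\mathrm{rad}(M)=J(R)M$ and the images of the $N_\lambda$ still sum to the whole; but semisimplicity splits off a finite subsum only when $M/\mathrm{rad}(M)$ has finite length, and over a perfect ring this is not automatic. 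Bridging this gap—showing that the perfect-ring hypothesis does more than guarantee the mere existence of maximal submodules and in fact renders their structure space quasi-compact—is the crux on which the argument stands or falls.
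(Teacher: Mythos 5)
Your proposal reproduces the paper's argument up to and including the identity $\sum_{\lambda\in\Lambda}N_\lambda=M$: the appeal to the Alexander Subbasis Theorem, the rewriting $\bigcap_{\lambda}\C(N_\lambda)=\C\bigl(\sum_{\lambda}N_\lambda\bigr)$ via Lemma~\ref{bpvs}(\ref{b}), and the use of the perfect-ring hypothesis (existence of a maximal submodule above every proper submodule, cf.\ \cite{B60}) together with the assumption that $\dm$ contains all maximal submodules, are exactly the paper's steps. Where you stop---the extraction of a finite subfamily with $\sum_{i=1}^{n}N_{\lambda_i}=M$---is precisely the step the paper claims to settle: it asserts that ``the identity $1\in\sum_{\lambda\in\Lambda}N_\lambda$,'' writes $1=\sum_{i=1}^{n}x_{\lambda_i}$ with $x_{\lambda_i}\in N_{\lambda_i}$, and concludes $M=\sum_{i=1}^{n}N_{\lambda_i}$. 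That argument is transplanted from the ideal-theoretic setting: a module has no identity element, and even granting a distinguished element $m\in\sum_{i=1}^{n}N_{\lambda_i}$, the conclusion $M=\sum_{i=1}^{n}N_{\lambda_i}$ would require $M$ to be generated by $m$. So the paper fills your gap with a step that is valid only for cyclic modules (e.g.\ $M=R$ viewed as a module over itself), not in the stated generality.

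Your suspicion that the gap cannot be bridged without further hypotheses is correct. Take $R=k$ a field (a perfect ring) and $M$ an infinite-dimensional $k$-vector space, and let $\dm$ be the class of all maximal (or of all proper) subspaces, so the hypotheses of the theorem hold. For the family $\{\C(\langle x\rangle)\}_{x\in M}$ one has $\bigcap_{x\in M}\C(\langle x\rangle)=\C(M)=\emptyset$ by Lemma~\ref{bpvs}, while every finite intersection equals $\C\bigl(\langle x_1,\dots,x_n\rangle\bigr)$, which is nonempty because the finite-dimensional subspace $\langle x_1,\dots,x_n\rangle$ is proper and extends to a hyperplane. A family of subbasic closed sets with the finite intersection property and empty total intersection shows $\dm$ is not quasi-compact. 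Hence the statement fails as written; it needs something like finite generation of $M$, under which the finite extraction is automatic (Theorem~\ref{comp}). In short: your attempt is incomplete, but the missing step is not yours to supply---it marks a genuine error in the paper's proof (and in the statement itself), and you located it exactly.
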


\begin{proof}
Let $\{\mathcal K_{ \lambda}\}_{\lambda \in \Lambda}$ be a family of subbasis closed sets of $\dm$  such that $\bigcap_{\lambda\in \Lambda}\mathcal K_{ \lambda}=\emptyset.$ This implies that
$\mathcal K_{ \lambda}=\mathcal{C}(N_{\lambda})$ for some ideals $N_{\lambda}$ of $M$, and 
$$\bigcap_{\lambda \in \Lambda}\mathcal{C}(N_{\lambda})=\mathcal{C}\left(\sum_{\lambda \in \Lambda} N_{\lambda}  \right)=\emptyset.$$
If $\sum_{\lambda \in \Lambda} N_{\lambda}  \neq M,$ then we must have a maximal ideal $L$ of $M$  such that $\sum_{\lambda \in \Lambda} N_{\lambda}  \subseteq L.$ Moreover, 
$$ N_{\lambda}  \subseteq \sum_{\lambda \in \Lambda} N_{\lambda}  \subseteq L,$$
for all $\lambda \in \Lambda.$ Therefore $L\in \mathcal{C}(N_{\lambda})=\mathcal K_{\lambda}$ for all $\lambda \in \Lambda$, a contradiction of our assumption. Hence $\sum_{\lambda \in \Lambda} N_{\lambda}=M,$ and the identity $1\in \sum_{\lambda \in \Lambda} N_{\lambda}.$ This implies the existence of a finite subset $\{\lambda_{\scriptscriptstyle 1}, \ldots, \lambda_{\scriptscriptstyle n}\}$ of $\Lambda$ such that $1= \sum_{i=1}^n x_{\lambda_i}$ (where $x_{\lambda_i}\in N_{\lambda_i}$), and hence  $M= \sum_{i=1}^n N_{\lambda_i},$ which establishes the finite intersection property. Therefore, $\dm$ is quasi-compact by Alexander Subbase Theorem.
\end{proof} 

\smallskip
\begin{corollary}
The structure spaces of maximal, prime, semiprime, extraordinary, strongly irreducible, primary, irreducible, completely irreducible, radical submodules are quasi-compact.
\end{corollary}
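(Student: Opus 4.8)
The plan is to obtain the Corollary as a consequence of Theorem~\ref{csb}. That theorem is stated over a perfect ring (to guarantee the existence of maximal submodules), so I read the Corollary in the same setting. Theorem~\ref{csb} yields quasi-compactness for any class $\dm$ that contains every maximal submodule of $M$; hence the whole statement reduces to a single uniform assertion, namely that each of the nine listed classes contains all maximal submodules of $M$. Once this is checked, quasi-compactness follows for all of them at once.

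The backbone is the classical fact that every maximal submodule is prime: if $N$ is maximal then $M/N$ is simple, and whenever $rm\in N$ with $m\notin N$ the image $\bar m$ generates $M/N$, i.e. $R\bar m=M/N$, which forces $rM\subseteq N$, that is, $r\in(N:M)$. From ``maximal $\Rightarrow$ prime'' the prime-type classes follow immediately: a prime submodule is semiprime (it is the intersection of the single prime submodule $N$), primary (take $n=1$ in the definition), and radical (since $\sqrt N=N$ for prime $N$). For the irreducible and completely irreducible classes I would argue directly from maximality: if $N$ is maximal and $N=\bigcap_{\lambda}N_\lambda$, then each $N_\lambda\supseteq N$, so $N_\lambda\in\{N,M\}$, and since the intersection equals $N\neq M$ at least one $N_\lambda$ must equal $N$; thus $N$ cannot be written as an intersection of submodules all strictly larger than it. This single sweep settles the maximal, prime, semiprime, primary, radical, irreducible, and completely irreducible classes.

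The step I expect to be the main obstacle is the extraordinary and strongly irreducible classes, for which the uniform strategy above fails. Both are governed by an implication of the form ``$L\cap K\subseteq N$ implies $L\subseteq N$ or $K\subseteq N$'', and a maximal submodule need not satisfy it. For example, over a field $k$ every proper subspace of $M=k^2$ is prime, hence semiprime; a line $N$ is then maximal, yet any two distinct lines $L,K$ other than $N$ give $L\cap K=0\subseteq N$ with neither contained in $N$, so $N$ is neither extraordinary nor strongly irreducible. Consequently these two classes cannot be forced into the scope of Theorem~\ref{csb} via maximal submodules. To close them I would either add the hypothesis that $M$ is a top module, the natural setting in which the closed sets $\C(N)$ are stable under finite unions and these irreducibility-type conditions become tractable, or else invoke Theorem~\ref{comp} when $M$ is finitely generated. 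Absent such a hypothesis, supplying a separate quasi-compactness argument for the extraordinary and strongly irreducible structure spaces is where the genuine difficulty lies.
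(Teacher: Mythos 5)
Your route is exactly the paper's: the corollary is stated immediately after Theorem~\ref{csb} with no proof, the implicit justification being that each listed class contains every maximal submodule, and your verification of this for the maximal, prime, semiprime, primary, radical, irreducible and completely irreducible classes (maximal $\Rightarrow$ prime, plus the direct intersection argument for the two irreducibility notions) is correct and is precisely what the paper leaves unsaid. The obstacle you isolate is also genuine, and it is a gap in the paper rather than a defect of your attempt: your $k^2$ example is valid --- every proper subspace of $k^2$ is prime, hence semiprime, and two distinct lines other than $N$ meet in $0\subseteq N$ --- so a maximal submodule need not be strongly irreducible or extraordinary, and Theorem~\ref{csb} cannot deliver those two cases. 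Be aware, though, of what the example does and does not show: over $k^2$ \emph{no} proper subspace is strongly irreducible or extraordinary, so those two structure spaces are empty and trivially quasi-compact, and $k^2$ is finitely generated so Theorem~\ref{comp} covers it anyway; the example refutes the paper's proof strategy for these classes, not the statement itself.

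The defect in fact runs deeper than the two classes you flagged, and your suggested fallback to Theorem~\ref{comp} is the right repair for the \emph{whole} corollary, not just for those two. The proof of Theorem~\ref{csb} deduces from $\sum_{\lambda}N_{\lambda}=M$ that ``$1\in\sum_{\lambda}N_{\lambda}$,'' which is meaningless in a module: extracting a finite subsum equal to $M$ requires $M$ to be finitely generated (or at least cyclic, a generator playing the role of $1$). Indeed Theorem~\ref{csb} is false as stated. Let $V$ be an infinite-dimensional vector space over a field $k$ (a perfect ring), and let $\dm$ be any class of proper subspaces containing all hyperplanes --- for instance any of the seven classes you settled, since by your own argument each contains all maximal submodules, which for $V$ are exactly the hyperplanes. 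The subbasic closed sets $\C(N)$, with $N$ ranging over the finite-dimensional subspaces of $V$, have the finite intersection property, because a finite sum of finite-dimensional subspaces is proper and hence lies inside a hyperplane belonging to $\dm$; yet $\bigcap_{N}\C(N)=\C(V)=\emptyset$. So none of these structure spaces is quasi-compact, and the corollary fails for all seven classes over $V$. The statement that actually holds is the one you retreat to: under finite generation (Theorem~\ref{comp}), or Noetherianness (Theorem~\ref{niqc}), all nine classes are handled at once.
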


Notice that in Theorem
\ref{csb}
the containment of all maximal submodules to a class $\dm$ of submodules is a sufficient condition for quasi-compactness of the structure space $\dm$. However, for the class of finitely generated submodules of a module, for instants, it is also a necessary condition.

\smallskip
\begin{proposition}
Let $M$ be a module over a perfect ring.	If the structure space $\dm$ of finitely generated (proper) submodules is quasi-compact, then $\dm$ contains all maximal submodules of $M.$ 
\end{proposition}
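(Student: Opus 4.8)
The plan is to argue by contraposition: I would assume that $\dm$, the class of finitely generated proper submodules, fails to contain some maximal submodule $L$ of $\m$, and then exhibit a family of subbasis closed sets with the finite intersection property but empty total intersection, thereby contradicting quasi-compactness. Since every maximal submodule is proper, the failure $L\notin\dm$ means precisely that $L$ is \emph{not} finitely generated; this is the hypothesis I would exploit.

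For the construction, I would write $L=\sum_{i\in I}Rx_i$ as the sum of its cyclic submodules (indexing $I$ by the elements of $L$), and for each finite $F\subseteq I$ set $N_F=\sum_{i\in F}Rx_i$. Each $N_F$ is finitely generated and is contained in $L\subsetneq \m$, hence is a proper finitely generated submodule, so $N_F\in\dm$. The family I would test for (non-)quasi-compactness is $\{\C(N_F)\}_F$, indexed over the finite subsets $F$ of $I$.

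The verification splits into two checks. For the finite intersection property, given finitely many $F_1,\dots,F_n$, Lemma \ref{bpvs}(\ref{b}) gives $\bigcap_{j=1}^{n}\C(N_{F_j})=\C\bigl(\sum_{j}N_{F_j}\bigr)=\C(N_{F_1\cup\cdots\cup F_n})$, and the right-hand side is nonempty because $N_{F_1\cup\cdots\cup F_n}$ is itself a finitely generated proper submodule lying in it. For the total intersection, Lemma \ref{bpvs}(\ref{b}) gives $\bigcap_F\C(N_F)=\C\bigl(\sum_F N_F\bigr)=\C(L)$, and the decisive point is that $\C(L)=\emptyset$: if some $K\in\dm$ satisfied $L\subseteq K$, maximality of $L$ would force $K=L$, contradicting $L\notin\dm$. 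Thus $\{\C(N_F)\}_F$ is a family of closed sets with the finite intersection property but empty intersection, so $\dm$ is not quasi-compact, which is the contrapositive of the claim.

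The argument involves no deep obstacle; the two places to watch are that each finite subintersection stays nonempty — which is exactly where properness of $N_F$, inherited from $L\neq \m$, is used — and the identity $\C(L)=\emptyset$, which is where maximality of $L$ combined with $L\notin\dm$ enters. Note that the perfect-ring hypothesis is not needed in the construction itself; it serves only to guarantee that maximal submodules of $\m$ exist, so that the conclusion is not vacuous and parallels the sufficient condition of Theorem \ref{csb}.
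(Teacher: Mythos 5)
Your proposal is correct and follows essentially the same route as the paper's own proof: both argue by contraposition, take a maximal submodule $L$ that is not finitely generated, build a family of subbasis closed sets from the finitely generated (cyclic) submodules inside $L$ (the paper uses $\{\mathcal{C}(\langle x\rangle)\mid x\in L\}$, of which your $\{\mathcal{C}(N_F)\}_F$ is just the closure under finite intersections), verify the finite intersection property via properness of these submodules, and derive emptiness of the total intersection from maximality of $L$ together with $L\notin\mathcal{D}(M)$. Your closing observation that the perfect-ring hypothesis only guarantees non-vacuity also matches the paper, whose proof never invokes it.
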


\begin{proof}
Let $L$ be a maximal submodule of the module $M$ such that $L$ is not finitely generated. Let us consider the collection of closed subspaces: $$\mathcal K=\left\{\mathcal{C}(\langle x\rangle)\cap \dm\mid x\in L \right\}.$$  We claim that $\cap \mathcal K=\emptyset$. If not, let $T\in \cap \mathcal K$. Then $T$ is finitely generated and $L\subseteq T$. Since $L$ is not a finitely generated submodule, we must have $T\supsetneq L,$ and that implies $T=M$, which contradicts the fact that $\dm$ consists of proper submodules. But clearly $\mathcal K$ has the finite intersection property and hence $\dm$ is not quasi-compact.
\end{proof}

Since, in general, a submodule of a finitely generated module need not be finitely generated, the following result provides another sufficient condition for the quasi-compactness of a structure space, and this is an extension of Proposition 3.5 of \cite{FGS23}.

\smallskip
\begin{theorem}\label{niqc}
If $M$ is a Noetherian module then every structure space $\dm$ is quasi-compact.
\end{theorem}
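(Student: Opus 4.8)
The plan is to argue exactly as in Theorems \ref{comp} and \ref{csb}, reducing quasi-compactness via the Alexander Subbasis Theorem to the finite intersection property for the subbasic closed sets $\{\C(N)\}_{N\in\sm}$. So I would begin with an arbitrary family $\{\C(N_\lambda)\}_{\lambda\in\Lambda}$ satisfying $\bigcap_{\lambda\in\Lambda}\C(N_\lambda)=\emptyset$ and try to produce a finite subfamily with empty intersection. By Lemma \ref{bpvs}(\ref{b}) the hypothesis is equivalent to $\C\!\left(\sum_{\lambda\in\Lambda}N_\lambda\right)=\emptyset$, so the whole problem becomes: show that the (possibly infinite) sum $\sum_{\lambda\in\Lambda}N_\lambda$ is already realized by a finite subsum.

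This last reduction is where the Noetherian hypothesis does all the work. I would consider the family
\[
\mathcal{F}=\Big\{\ {\textstyle\sum_{\lambda\in F}}N_{\lambda}\ :\ F\subseteq\Lambda\ \text{finite}\,\Big\}\subseteq\sm ,
\]
which is directed upward since ${\textstyle\sum_{\lambda\in F_1}}N_\lambda+{\textstyle\sum_{\lambda\in F_2}}N_\lambda={\textstyle\sum_{\lambda\in F_1\cup F_2}}N_\lambda\in\mathcal{F}$. Because $M$ satisfies the ascending chain condition on submodules, $\mathcal{F}$ possesses a maximal element $P={\textstyle\sum_{\lambda\in F_0}}N_\lambda$ for some finite $F_0\subseteq\Lambda$. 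For any $\mu\in\Lambda$ the submodule $P+N_\mu={\textstyle\sum_{\lambda\in F_0\cup\{\mu\}}}N_\lambda$ again lies in $\mathcal{F}$ and contains $P$, so maximality forces $P+N_\mu=P$, that is, $N_\mu\subseteq P$. Hence $\sum_{\lambda\in\Lambda}N_\lambda\subseteq P$, and as the reverse inclusion is trivial we get $P=\sum_{\lambda\in\Lambda}N_\lambda$.

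It then remains only to transport the empty intersection back to the finite index set $F_0$. Applying Lemma \ref{bpvs}(\ref{b}) once more,
\[
\bigcap_{\lambda\in F_0}\C(N_\lambda)=\C(P)=\C\!\left(\sum_{\lambda\in\Lambda}N_\lambda\right)=\emptyset ,
\]
so $\{\C(N_\lambda)\}_{\lambda\in F_0}$ is the required finite subfamily with empty intersection, and the Alexander Subbasis Theorem delivers quasi-compactness of $\dm$.

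I expect no real obstacle; the only point worth flagging is that, in contrast to the proof of Theorem \ref{comp}, this route never invokes the converse implication ``$\C(N)=\emptyset\Rightarrow N=M$''. The ascending chain condition equates $\sum_{\lambda\in\Lambda}N_\lambda$ with a finite subsum directly, so the conclusion $\C(\sum_{\lambda\in F_0}N_\lambda)=\emptyset$ is available whether or not that sum equals $M$. (In fact, since every Noetherian module is finitely generated, the statement also follows at once from Theorem \ref{comp}; the argument above is the self-contained one suggested by the Noetherian hypothesis and is the form extending Proposition 3.5 of \cite{FGS23}.)
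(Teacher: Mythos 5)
Your proof is correct and takes essentially the same route as the paper's: both arguments reduce quasi-compactness, via the Alexander Subbasis Theorem and Lemma \ref{bpvs}(\ref{b}), to showing that the full sum $\sum_{\lambda\in\Lambda}N_{\lambda}$ is already realized by a finite subsum, and then identify the total intersection of the subbasic closed sets with the corresponding finite intersection. The only difference is which standard face of the Noetherian condition is used to produce the finite subsum --- the paper takes generators of the finitely generated submodule $T=\sum_{\lambda}N_{\lambda}$ and traces each into finitely many summands, while you apply the maximal condition to the directed family of finite subsums --- and your closing observation that the result also follows from Theorem \ref{comp} (since a Noetherian module is finitely generated) is accurate.
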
   

\begin{proof}
Consider a collection $\{\dm\cap \mathcal{C}(N_{\lambda}) \}_{\lambda\in \Omega}$ of subbasis closed sets of $\dm$ with the finite intersection property. By assumption, the submodule $T=\sum_{\lambda\in \Omega}N_{\lambda}$ is finitely generated, say $T=(\alpha_1,\ldots,\alpha_n)$. For  every $1\leqslant j\leqslant n$, there exists a finite subset $\Lambda_j$ of $\Omega$ such that $\alpha_j\in \sum_{\lambda\in \Lambda_j}N_{\lambda}$. Thus, if  $\Lambda:=\bigcup_{j=1}^n\Lambda_j$, it immediately follows that $T=\sum_{\lambda\in \Lambda}N_{\lambda}$. Hence we have
\begin{align*} 
\bigcap_{\lambda\in \Omega}\left(\dm\cap \mathcal{C}(N_{\lambda})\right)&=\dm\cap \mathcal{C}(T)\\&=\dm\cap  \mathcal{C}\left(\sum_{\lambda\in \Lambda}N_{\lambda} \right)\\&= \bigcap_{\lambda\in \Lambda}\left(\dm\cap \mathcal{C}(N_{\lambda})\right) \neq \emptyset,
\end{align*}
since $\Lambda$ is finite and $\{\dm\cap \mathcal{C}(N_{\lambda}) \}_{\lambda\in \Omega}$ has the finite intersection property. Then the conclusion follows by the Alexander Subbasis Theorem. 
\end{proof}

\smallskip
\begin{corollary}
If $M$ is a Noetherian module then every structure space $\dm$ is  Noetherian.
\end{corollary}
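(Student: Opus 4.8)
The plan is to invoke the standard characterization that a topological space is Noetherian precisely when each of its subspaces is quasi-compact, equivalently when the ascending chain condition holds for open sets. Since Theorem \ref{niqc} already furnishes the quasi-compactness of structure spaces over a Noetherian module, the entire argument should reduce to showing that every subspace of $\dm$ is again a structure space of the same kind, so that Theorem \ref{niqc} applies to it verbatim.

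First I would record the observation that the proof of Theorem \ref{niqc} uses no special feature of the distinguished class: it relies only on the finite generation of $\sum_{\lambda} N_{\lambda}$ together with Lemma \ref{bpvs}(\ref{b}), and therefore goes through for an \emph{arbitrary} subclass $Y$ of the proper submodules of the Noetherian module $\m$. Consequently, it suffices to recognize each subspace of $\dm$ as such a class carrying the structure-space topology. Let $Y\subseteq \dm$ be an arbitrary subset equipped with the subspace topology induced from $\tau_{\dm}$. Because $\{\C(N)\}_{N\in \sm}$ is a closed subbasis for $\dm$, the restricted family $\{\C(N)\cap Y\}_{N\in \sm}$ is a closed subbasis for the subspace topology on $Y$.

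The crucial point is then the identity
\[
\C(N)\cap Y=\{L\in Y\mid N\subseteq L\},
\]
whose right-hand side is exactly the defining subbasic closed set of the structure space on the class $Y$. Since $Y$ consists of proper submodules and $\bigcap_{N\in \sm}\bigl(\C(N)\cap Y\bigr)=\emptyset$ by Lemma \ref{bpvs}(\ref{a}), this topology is well defined, and having the same closed subbasis forces the subspace topology on $Y$ to coincide with its own structure-space topology. Applying Theorem \ref{niqc} to $Y$, legitimate by the first step, shows that every subspace $Y$ of $\dm$ is quasi-compact, whence $\dm$ is Noetherian.

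The only delicate point — and the one I would flag as the main obstacle — is this identification of the subspace topology with the intrinsic structure-space topology on $Y$; once the subbasis-restriction identity above is secured, the rest is formal. I would also double-check the precise direction of the characterization being used, namely that ``every subspace quasi-compact'' yields the ascending chain condition on open sets: given an ascending chain of opens, their union is an open subspace covered by the chain, and quasi-compactness of that subspace extracts a finite subcover, forcing the chain to stabilize.
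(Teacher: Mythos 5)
Your proposal is correct and follows the route the paper intends: the corollary is stated without proof precisely because, as you observe, any subspace $Y\subseteq\dm$ is itself a structure space on a subclass of proper submodules (the subbasis $\{\C(N)\cap Y\}_{N\in\sm}$ being exactly the intrinsic one for $Y$), so Theorem \ref{niqc} applies to every subspace and yields quasi-compactness, which by the standard characterization makes $\dm$ Noetherian. Your flagged delicate points (the subspace/intrinsic topology identification and the direction of the characterization) are both handled correctly.
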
 

\subsection{Separation properties} We now focus on separation properties of structure spaces. Since for any two distinct elements $L_1$ and $L_2$ of a distinguished class of submodules implies that $\C(L_1)\neq \C(L_2)$, we immediately have the following separation property, which generalizes Proposition 2.4 of \cite{ZT00}.

\smallskip
\begin{proposition}\label{t0}
Every structure space is  a $T_0$-space. 
\label{ct0t1}  
\end{proposition}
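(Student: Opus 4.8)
The plan is to verify the $T_0$ separation axiom directly from the definition of the closure operation $\C$ given in \eqref{vsl}. Recall that a topological space is $T_0$ precisely when, for any two distinct points, at least one of them has a closed neighbourhood (equivalently, a closed set) excluding the other; equivalently, distinct points have distinct closures. Since the topology $\tau_{\dm}$ is generated by the closed subbasis $\{\C(N)\}_{N\in \sm}$, I would argue in terms of these subbasis closed sets. The key observation, already flagged in the paragraph preceding the statement, is that distinct submodules in a distinguished class give rise to distinct subbasis closed sets.

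First I would take two distinct elements $L_1, L_2 \in \dm$. Since $L_1 \neq L_2$ as submodules, without loss of generality there is an element $x \in L_1 \setminus L_2$ (otherwise swap the roles of $L_1$ and $L_2$). Consider the subbasis closed set $\C(\langle x\rangle)$, where $\langle x\rangle$ is the cyclic submodule generated by $x$. By the defining formula \eqref{vsl}, we have $L_2 \in \C(\langle x\rangle)$ if and only if $\langle x \rangle \subseteq L_2$, which fails since $x \notin L_2$; whereas $L_1 \in \C(\langle x\rangle)$ since $x \in L_1$ forces $\langle x\rangle \subseteq L_1$. Thus $\C(\langle x\rangle)$ is a closed set containing $L_1$ but not $L_2$, which is exactly the $T_0$ condition for this pair.

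The main point to be careful about is the logical direction of the inclusion in \eqref{vsl} and the fact that the membership $L \in \C(N)$ is governed by $N \subseteq L$ rather than the reverse; this is why producing a separating \emph{closed} set requires locating a generator of one submodule that is absent from the other, which the strict set-theoretic inequality $L_1 \neq L_2$ guarantees. I do not expect any genuine obstacle here: the result is essentially immediate once one unwinds the definition, and the only thing to confirm is that $\C(\langle x\rangle)$ is a legitimate closed set of $\tau_{\dm}$, which holds because it is one of the generating subbasis sets (with $\langle x\rangle \in \sm$). Since $T_0$ requires only one of the two points to be separated from the other by a closed (or open) set, a single such $\C(\langle x \rangle)$ suffices, and there is no need to symmetrize the argument.
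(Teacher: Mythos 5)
Your proof is correct and follows essentially the same route as the paper, which disposes of the statement by observing that distinct elements $L_1\neq L_2$ of $\dm$ are separated by the subbasis closed sets themselves. The only cosmetic difference is that you separate with $\C(\langle x\rangle)$ for some $x\in L_1\setminus L_2$, whereas one can use $\C(L_1)$ directly (since $L_1\in\C(L_1)$ while $L_2\in\C(L_1)$ would force $L_1\subseteq L_2$); both are immediate unwindings of \eqref{vsl}.
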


Before we embark on the existence of generic points of structure spaces, we recall two topological notions that we need. A closed subset $S$ of a space $X$ is called \emph{irreducible} if $S$ is not the union of two properly smaller closed subsets of $X$. An element $s$ of $S$ is called a \emph{generic point} of $S$ if $S = \overline{\{s\}},$ where $\overline{\{s\}}$ is the closure of $\{s\}$. 

If $M$ is a top module and $\mathcal{D}(M)$ is the class of prime submodules of $M$ endowed with the Zariski topology, then it is shown in \cite[Proposition 3, Corollary 1]{Y11} that a nonempty closed subset $X$ of $\mathcal{D}(M)$ is irreducible if and only if  the intersection of all prime submodules belonging to $X$ is itself a prime submodule of $M$. For an arbitrary structure space $\dm$, we do not have a similar characterization. However, in Lemma \ref{irrc} below, we shall identify a class of irreducible closed subsets of a structure space $\dm$, which is in fact good enough to play good roles in 

\begin{itemize}
	
\item[$\bullet$] characterizing $T_1$ structure spaces (Theorem \ref{t1}),

\item[$\bullet$] proving the structure space of proper submodules is spectral (Theorem \ref{pss}), and

\item[$\bullet$] studying connectedness of structure spaces (Theorem \ref{conis}).
\end{itemize}

\smallskip
\begin{lemma}\label{irrc}
The subsets $\{\mathcal{C}(N)\mid N\in \mathcal{C}(N)\}$ of a structure space $\dm$ are irreducible. 
\end{lemma}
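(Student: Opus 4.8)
The plan is to recognize each set $\C(N)$ appearing in the statement as the closure of a single point of the structure space, and then to invoke the general topological fact that the closure of a point is always irreducible. First I would observe that the hypothesis $N\in\C(N)$ is simply the requirement $N\in\dm$, because $N\subseteq N$ holds trivially; so I fix an arbitrary $N\in\dm$ and set out to prove that $\C(N)=\overline{\{N\}}$, the closure of the singleton $\{N\}$ in $\dm$.

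The inclusion $\overline{\{N\}}\subseteq\C(N)$ is free, since $\C(N)$ is a subbasic closed set that contains $N$. For the reverse inclusion I would take $L\in\C(N)$, i.e.\ $N\subseteq L$, and show $L\in\overline{\{N\}}$ by contradiction. If $L\notin\overline{\{N\}}$, then there is a basic open set $V$ with $L\in V$ but $N\notin V$; as the topology is generated by the closed subbasis $\{\C(N')\}_{N'\in\sm}$, such a $V$ is a finite intersection $V=\bigcap_{i=1}^{k}\bigl(\dm\setminus\C(N'_i)\bigr)$. Now $L\in V$ forces $N'_i\not\subseteq L$ for every $i$, while $N\notin V$ (together with $N\in\dm$) forces $N'_j\subseteq N$ for some $j$. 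Combining the latter with $N\subseteq L$ gives $N'_j\subseteq L$, contradicting $N'_j\not\subseteq L$. Hence $\C(N)\subseteq\overline{\{N\}}$, and the two sets coincide.

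With the identification in hand, irreducibility is immediate. If $\C(N)=\overline{\{N\}}=A\cup B$ with $A$ and $B$ closed, then $N\in A$ or $N\in B$; assuming $N\in A$ and using that $A$ is closed, we get $\overline{\{N\}}\subseteq A$, so $A=\C(N)$. Thus $\C(N)$ cannot be expressed as a union of two properly smaller closed subsets, which is exactly irreducibility.

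The only delicate step is the reverse inclusion in $\C(N)=\overline{\{N\}}$: one has to unwind the subbasic description of the open sets correctly (basic opens are finite intersections of the complements $\dm\setminus\C(N')$) and then combine the inclusion-reversing property recorded in Lemma \ref{bpvs}(1) with the containment $N\subseteq L$. Everything else is formal, and in fact the argument uses no special structure of the distinguished class $\dm$ beyond $N\in\dm$.
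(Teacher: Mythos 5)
Your proof is correct and takes essentially the same approach as the paper: both identify $\mathcal{C}(N)$ with the point closure $\overline{\{N\}}$ (using the hypothesis $N\in\mathcal{C}(N)$, i.e.\ $N\in\mathcal{D}(M)$) and then conclude by the standard fact that point closures are irreducible. The only cosmetic difference is that you argue by contradiction through basic open sets, while the paper works directly with the representation of the closure as an intersection of finite unions of subbasic closed sets together with the inclusion-reversing property --- these are dual formulations of the same unwinding.
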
 

\begin{proof} 
Let $\overline{N}$ be the closure of the submodule $N$ of a module $M$. We actually show more, namely, $\overline{N}=\mathcal{C}(N)$ for every $N\in\mathcal{C}(N)$. It is evident that $\overline{N}\subseteq \mathcal{C}(N)$. 
To have the other half of the inclusion, let us first consider the trivial case, that is, when $\overline{N}= \mathcal{D}(M)$. Since
\[
\mathcal{D}(M)=\overline{N}\subseteq \mathcal{C}(N)\subseteq \mathcal{D}(M),
\]
we have the equality. Now, suppose $\overline{N}\neq  \mathcal{D}(M)$.
Since $\overline{N}$ is a closed set, it has the following representation 
\[
\overline{N}={\bigcap_{\lambda\in\Lambda}}\left({\bigcup_{ i\,=1}^{ n_\lambda}}\mathcal{C}(N_{\lambda i})\right),
\]
for some index set $\Lambda$ and submodules  $N_{\lambda 1},\dots, N_{\lambda n_\lambda}$ of $M$. 
By hypothesis, this implies ${\bigcup_{ i\,=1}^{ n_\lambda}}\mathcal{C}(N_{\lambda i})\neq \emptyset$, and  hence $N\in   {\bigcup_{ i\,=1}^{ n_\lambda}}\mathcal{C}(N_{\lambda i})$, for each $\lambda$. From these, we conclude that for each $\lambda$, \[\mathcal{C}(N)\subseteq {\bigcup_{ i=1}^{ n_\lambda}}\mathcal{C}(N_{\lambda i}),\] and hence, $\mathcal{C}(N)\subseteq \overline{N},$ as required.	
\end{proof}     

\smallskip
\begin{corollary}\label{spiir}
Every non-empty subbasis closed subset of structure space of proper submodules is irreducible.
\end{corollary}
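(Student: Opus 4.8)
The plan is to deduce this directly from Lemma~\ref{irrc}. Recall that the subbasis closed subsets of the structure space $\dm$ are precisely the sets $\C(N)$ with $N$ ranging over $\sm$, so it suffices to show that every such $\C(N)$ that is non-empty is irreducible. Since Lemma~\ref{irrc} asserts irreducibility of exactly those $\C(N)$ for which $N\in\C(N)$, the whole matter reduces to verifying that, for the class $\dm$ of \emph{all} proper submodules, the two conditions $\C(N)\neq\emptyset$ and $N\in\C(N)$ are equivalent.

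One implication is immediate: if $N\in\C(N)$ then of course $\C(N)\neq\emptyset$. For the converse I would argue as follows. Suppose $\C(N)\neq\emptyset$ and pick some $L\in\C(N)$. By the definition in \eqref{vsl}, $L$ is a proper submodule of $\m$ with $N\subseteq L$; in particular $N\subseteq L\subsetneq\m$, so $N\neq\m$, i.e.\ $N$ is itself a proper submodule. Because $\dm$ here consists of all proper submodules, this yields $N\in\dm$, and since trivially $N\subseteq N$ we conclude $N\in\C(N)$.

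With this equivalence in hand the argument closes at once: every non-empty subbasis closed set $\C(N)$ satisfies the hypothesis $N\in\C(N)$ of Lemma~\ref{irrc} and is therefore irreducible, which is the assertion.

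I do not expect any genuine technical obstacle here; the only point deserving care is the converse implication above, where it is essential that $\dm$ be the \emph{full} class of proper submodules. For a proper distinguished subclass $\dm$, a non-empty $\C(N)$ may well arise from a submodule $N$ that does not itself lie in $\dm$, so that $N\in\C(N)$ fails and Lemma~\ref{irrc} no longer applies; this is precisely why the statement is specialized to proper submodules.
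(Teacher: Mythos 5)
Your proof is correct and follows exactly the route the paper intends: the corollary is stated without proof as an immediate consequence of Lemma~\ref{irrc}, and the missing detail is precisely the observation you supply, namely that for the full class of proper submodules a non-empty $\C(N)$ forces $N$ itself to be proper, hence $N\in\C(N)$. Your closing remark about why this equivalence fails for proper subclasses of $\dm$ is also the right explanation for why the corollary is restricted to the space of all proper submodules.
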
 

One immediate application of Lemma \ref{irrc} is the following characterization of $T_1$-structure spaces. It is known (see \cite[Theorem 3]{Y11}) that in a top module, the class of prime submodules endowed with Zariski topology is a $T_1$-space if and only if every prime submodule is maximal in the class of prime submodules. The following theorem generalizes this result.

\smallskip
\begin{theorem}\label{t1}
A structure space is a $T_1$-space if and only if it is contained in the class of structure space of maximal submodules. 
\end{theorem}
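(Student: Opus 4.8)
The plan is to translate the $T_1$ property into a statement about the closure operator $\mathcal{C}$ and then read off a maximality condition. Recall that a topological space is $T_1$ exactly when every singleton is closed, so the whole question reduces to computing $\overline{\{N\}}$ for an arbitrary $N\in\dm$. This computation is already essentially available: since $N\in\mathcal{C}(N)$ precisely when $N\in\dm$, the argument in the proof of Lemma \ref{irrc} yields $\overline{\{N\}}=\mathcal{C}(N)=\{L\in\dm\mid N\subseteq L\}$. Hence $\dm$ is $T_1$ if and only if $\mathcal{C}(N)=\{N\}$ for every $N\in\dm$, and the proof will consist of interpreting this equality as a maximality statement.

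For the forward implication I would assume $\dm$ is $T_1$ and fix $N\in\dm$. Then $\{N\}$ is closed, so $\{N\}=\overline{\{N\}}=\mathcal{C}(N)$ by Lemma \ref{irrc}. Unwinding the definition of $\mathcal{C}(N)$, any $L\in\dm$ with $N\subseteq L$ must lie in $\{N\}$, forcing $L=N$; thus no member of $\dm$ properly contains $N$, i.e. $N$ is maximal in the class. For the reverse implication I would assume every member of $\dm$ is maximal and, given $N\in\dm$, observe that every $L\in\mathcal{C}(N)$ satisfies $N\subseteq L\subsetneq M$, so maximality forces $L=N$ and therefore $\mathcal{C}(N)=\{N\}$. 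Applying Lemma \ref{irrc} once more gives $\overline{\{N\}}=\{N\}$, so singletons are closed and $\dm$ is $T_1$.

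The step requiring the most care is the precise meaning of ``maximal'' on the right-hand side of the equivalence, and this is where I expect the only real subtlety. The topology of $\dm$ detects only inclusions among members of the chosen class, so what the $T_1$ condition literally produces is that each $N$ is maximal \emph{relative to} $\dm$, that is, unrefinable within the class; this matches the formulation ``maximal in the class of prime submodules'' used in \cite{Y11}. To phrase the conclusion as containment in the structure space of maximal submodules of $M$, one must identify maximality within $\dm$ with maximality among all proper submodules of $M$. For the class of all proper submodules this identification is immediate, and for the distinguished subclasses it is where I would concentrate the remaining bookkeeping; the purely topological content is otherwise routine once Lemma \ref{irrc} is in hand.
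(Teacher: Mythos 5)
Your proposal is correct and follows essentially the same route as the paper: both rely on Lemma \ref{irrc} to identify $\overline{\{N\}}$ with $\mathcal{C}(N)$ for $N\in\dm$ and then translate the $T_1$ condition into $\mathcal{C}(N)=\{N\}$, reading the reverse implication off from maximality exactly as you do. The subtlety you flag in the forward direction is genuine, and it is precisely where the paper's own proof is loose: its step $N\in\mathcal{C}(L)$, for $N$ a maximal submodule of $M$ containing $L$, tacitly assumes both that such an $N$ exists and that $N\in\dm$, neither of which is given, so what the $T_1$ hypothesis actually yields is maximality \emph{within} the class $\dm$ --- the formulation you state, which matches the result of \cite{Y11} that the paper cites.
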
  

\begin{proof} 
If $L\in\mathcal{D}(M)$, then by Lemma \ref{irrc}, $\overline{L}=\mathcal{C}(L)$. Suppose $N$ is a maximal submodule of $M$ such that $L\subseteq N$. Let $\mathcal{D}(M)$ be a $T_1$-space. Then   \[N\in 	\mathcal{C}(L)=\overline{L}= \{L\},\] which implies $N=L$.  
Conversely, let $\dm$ be a class of submodules of $M$ contained in the class of maximal submodules of $M$. Then for every maximal submodule $N$ of $M$, we have $\mathcal{C}(N)=\{N\}$, which implies $N\in \mathcal{C}(N)$. Hence, by Lemma \ref{irrc}, $\overline{N}=\{N\}$. This proves that the structure space of the class of maximal submodules of $M$ is a $T_1$-space and so is $\dm$.
\end{proof} 

As a consequence of the above theorem, we obtain a necessary condition for a Noetherian module to be Artinian. Recall that a module is called \emph{Noetherian} if every ascending chain of submodules of it is eventually
stationary, whereas an Artinian module is a module that satisfies the descending chain condition on its poset of submodules.

\smallskip
\begin{corollary}
If $\mathcal{D}(M)$ is a discrete structure space of a Noetherian module $M$, then $M$ is Artinian.  
\end{corollary}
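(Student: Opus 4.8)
The plan is to split the argument into a topological reduction, powered by the results already proved, and a single algebraic step that is the module-theoretic counterpart of the classical fact that a Noetherian ring of Krull dimension zero is Artinian. Throughout I read $\dm$ as the structure space of prime submodules, since this is the class for which discreteness carries genuine arithmetic content: as the discrete valuation ring example below shows, discreteness of an arbitrary distinguished class need not force Artinianness.

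First I would observe that every discrete space is $T_1$, so Theorem \ref{t1} applies and yields that each member of $\dm$ is a maximal submodule of $M$; in the present reading, \emph{every prime submodule of $M$ is a maximal submodule}. As a secondary remark, since $M$ is Noetherian, Theorem \ref{niqc} makes $\dm$ quasi-compact, and a discrete quasi-compact space is finite (the open cover by singletons admits a finite subcover), so $\dm$ is in fact a finite set. The engine of the proof, however, is the first consequence rather than finiteness, because finiteness of the set of maximal submodules on its own does not force Artinianness.

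For the algebraic step I would pass to the ring $\bar R := R/\mathrm{Ann}(M)$. A Noetherian module is finitely generated, say by $m_1,\dots,m_k$, so the map $\bar R \hookrightarrow \m^{k}$ sending $\bar r$ to $(rm_1,\dots,rm_k)$ exhibits $\bar R$ as a submodule of the Noetherian module $\m^{k}$; hence $\bar R$ is a Noetherian ring and $M$ is a faithful $\bar R$-module. Now every prime submodule $N$ being maximal means $M/N$ is simple, so $(N:\m)=\mathrm{Ann}(M/N)$ is a maximal ideal; using that $N\mapsto (N:\m)$ carries the prime submodules of $M$ onto $\mathrm{Supp}(M)=V(\mathrm{Ann}\,M)$, every prime of $\mathrm{Supp}(M)$ is maximal, i.e. $\dim_{\bar R}M=0$. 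A Noetherian module whose support consists only of maximal ideals has finite length, since its prime filtration has quotients $\bar R/\mathfrak m$ with $\mathfrak m$ maximal and hence simple; finite length gives the descending chain condition, so $M$ is Artinian.

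The hard part is this algebraic step, and specifically the surjectivity of $N\mapsto(N:\m)$ onto $\mathrm{Supp}(M)$: it is precisely this that guarantees that ``all prime submodules maximal'' rules out non-maximal primes in the support, rather than merely constraining the closed points of the spectrum. That the subtlety is real is seen from a discrete valuation ring $\m=R$, for which the class of maximal submodules is the single discrete point $\{\mathfrak m\}$ even though $R$ is not Artinian; here the obstruction is the prime submodule $(0)$, which is a generic point and makes the prime spectrum non-discrete, consistently with Lemma \ref{irrc}. Once the correspondence with the support is secured, the passage to $\bar R$ and the finite-length conclusion are routine.
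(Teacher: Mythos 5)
Your proof is correct, and it supplies substantially more than the paper does: the paper records no proof of this corollary beyond the phrase ``as a consequence of the above theorem,'' so the only argument on record is discrete $\Rightarrow$ $T_1$ $\Rightarrow$ (by Theorem \ref{t1}) every member of $\dm$ is a maximal submodule, with the passage from that statement to Artinianness left entirely unexplained. You begin with the same reduction and then do the real work: reduce to the Noetherian ring $R/\mathrm{Ann}(M)$, note that $(N:\m)$ is a maximal ideal whenever $N$ is a maximal submodule, use the surjectivity of $N\mapsto(N:\m)$ from prime submodules onto $\mathrm{Supp}(M)$ to conclude that the support consists of maximal ideals, and finish with a prime filtration whose quotients are then simple, giving finite length. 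Two further points. First, your insistence on reading $\dm$ as the class of prime submodules is a necessary repair rather than a convenience: your DVR observation is right, since for a discrete valuation ring viewed as a module over itself the class of maximal submodules is a one-point (hence discrete) structure space while the module is not Artinian, so the corollary is false for an arbitrary distinguished class, a defect the paper's formulation hides; the other natural reading, $\dm$ equal to all proper submodules, makes the statement true only because $T_1$-ness forces $M$ to be simple, and then the Noetherian hypothesis is never used. Second, the single step you cite rather than prove --- that $N\mapsto(N:\m)$ maps the prime submodules of a finitely generated module onto $\mathrm{Supp}(M)$ --- is a standard fact from the prime-submodule literature the paper already cites (see \cite{MMS97}, \cite{L84}); to make the proposal self-contained, verify that for $\mathfrak{p}\in\mathrm{Supp}(M)$ the kernel $N$ of $M\to(M/\mathfrak{p}M)_{\mathfrak{p}}$ is a prime submodule with $(N:\m)=\mathfrak{p}$, where Nakayama's lemma guarantees $(M/\mathfrak{p}M)_{\mathfrak{p}}\neq 0$. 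With that reference or verification in place, your argument is complete.
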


Recall that a topological space is \emph{sober} if every nonempty irreducible closed subset of it has a unique generic point. Sobriety is one of separation properties and also a component of the definition of a spectral space, which we shall discuss later. In \cite[Theorem 3.3]{AH11}, it is proved that the class of prime submodules of a top module endowed with Zariski topology is sober. We shall provide a characterization of structure spaces that have this property and this characterization generalizes the results Corollary 1 of \cite{Y11}, Theorem 3.3 of \cite{AH11}, and Theorem 2.6(2)(3) of \cite{ZT00}.  We first need a technical auxiliary lemma whose proof is easy to verify.

\smallskip
\begin{lemma}
\label{axaa} If $ N^{\omega}:=\bigcap\{L\mid L\in \mathcal{C}(N)\}$ for $N\in \sm$, then 
$N=N^{\omega}$, whenever $N\in\mathcal{D}(M).$
\end{lemma}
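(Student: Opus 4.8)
The plan is to prove the claimed equality $N = N^{\omega}$ directly from the definitions, exploiting the fact that $N$ itself lies in $\mathcal{D}(M)$. Recall that $N^{\omega} = \bigcap\{L \mid L \in \mathcal{C}(N)\}$, where by \eqref{vsl} the set $\mathcal{C}(N)$ consists precisely of those $L \in \mathcal{D}(M)$ with $N \subseteq L$. I would establish the two inclusions separately.

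For the inclusion $N \subseteq N^{\omega}$: every $L \in \mathcal{C}(N)$ satisfies $N \subseteq L$ by the very definition of $\mathcal{C}(N)$ in \eqref{vsl}, so $N$ is contained in the intersection of all such $L$, which is exactly $N^{\omega}$. This direction requires no hypothesis on $N$ and is immediate. For the reverse inclusion $N^{\omega} \subseteq N$: here is where the assumption $N \in \mathcal{D}(M)$ does the work. Since $N \in \mathcal{D}(M)$ and trivially $N \subseteq N$, we have $N \in \mathcal{C}(N)$; that is, $N$ is itself one of the members $L$ over which the intersection defining $N^{\omega}$ is taken. Consequently $N^{\omega} = \bigcap\{L \mid L \in \mathcal{C}(N)\} \subseteq N$, because an intersection of a family of sets is contained in each member of that family. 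Combining the two inclusions yields $N = N^{\omega}$, as desired.

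I do not anticipate any genuine obstacle here, which is consistent with the author's parenthetical remark that the proof is easy to verify. The only point that requires a moment's care is recognizing \emph{why} the hypothesis $N \in \mathcal{D}(M)$ is indispensable: without it, $N$ need not belong to $\mathcal{C}(N)$ (since $\mathcal{C}(N)$ is defined as a subset of $\mathcal{D}(M)$), and then the reverse inclusion could fail. In other words, the statement is really asserting that, for a submodule $N$ already lying in the distinguished class $\mathcal{D}(M)$, the closure operation followed by the intersection operation recovers $N$ exactly. The subtle step is thus the second inclusion, where membership $N \in \mathcal{C}(N)$ must be invoked, and everything else is a routine unwinding of \eqref{vsl}.
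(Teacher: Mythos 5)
Your proof is correct: the forward inclusion $N \subseteq N^{\omega}$ is immediate from the definition of $\mathcal{C}(N)$, and the reverse inclusion follows because $N \in \mathcal{D}(M)$ forces $N \in \mathcal{C}(N)$, so the intersection defining $N^{\omega}$ is contained in $N$. The paper omits the proof entirely (``easy to verify''), and your two-inclusion argument is precisely the routine verification the author intends, including the correct identification of where the hypothesis $N \in \mathcal{D}(M)$ is actually needed.
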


\smallskip
\begin{theorem}\label{sob}  
Every non-empty irreducible subbasis closed set $\mathcal{C}(N)$ of a structure space $\mathcal{D}(M)$ has a unique generic point if and only if $\mathcal{C}(N)$ contains $N^{\omega}.$ 
\end{theorem}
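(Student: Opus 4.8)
The plan is to reduce both implications to the two auxiliary lemmas, Lemma \ref{irrc} (from whose proof $\overline{\{L\}}=\mathcal{C}(L)$ for every $L\in\mathcal{D}(M)$) and Lemma \ref{axaa} ($L=L^{\omega}$ for every $L\in\mathcal{D}(M)$). The guiding observation, which drives both directions, is that any generic point of $\mathcal{C}(N)$ is forced to equal $N^{\omega}$. I would first record the trivialities that $N\subseteq N^{\omega}$ always holds (as $N^{\omega}$ is an intersection of submodules each containing $N$) and that $N^{\omega}\in\mathcal{S}(M)$; consequently the condition ``$\mathcal{C}(N)$ contains $N^{\omega}$'' reduces to the single requirement $N^{\omega}\in\mathcal{D}(M)$.

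For the forward implication, suppose $\mathcal{C}(N)$ has a generic point $s$. Since $s\in\overline{\{s\}}=\mathcal{C}(N)\subseteq\mathcal{D}(M)$, Lemma \ref{irrc} gives $\overline{\{s\}}=\mathcal{C}(s)$, and comparing with $\overline{\{s\}}=\mathcal{C}(N)$ yields $\mathcal{C}(s)=\mathcal{C}(N)$. Intersecting the members of each side then gives $s^{\omega}=N^{\omega}$, and because $s\in\mathcal{D}(M)$, Lemma \ref{axaa} forces $s=s^{\omega}=N^{\omega}$. Hence $N^{\omega}=s\in\mathcal{D}(M)$, so $N^{\omega}\in\mathcal{C}(N)$; the very same computation shows simultaneously that every generic point coincides with $N^{\omega}$, which is where the uniqueness comes from.

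For the reverse implication, assume $N^{\omega}\in\mathcal{C}(N)$, i.e. $N^{\omega}\in\mathcal{D}(M)$. By Lemma \ref{irrc} we have $\overline{\{N^{\omega}\}}=\mathcal{C}(N^{\omega})$, so it suffices to prove $\mathcal{C}(N^{\omega})=\mathcal{C}(N)$. The inclusion $\mathcal{C}(N^{\omega})\subseteq\mathcal{C}(N)$ is immediate from $N\subseteq N^{\omega}$ and the antitonicity in Lemma \ref{bpvs}(1); for the reverse inclusion, any $L\in\mathcal{C}(N)$ contains the intersection $N^{\omega}$ by the very definition of $N^{\omega}$, whence $L\in\mathcal{C}(N^{\omega})$. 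Thus $N^{\omega}$ is a generic point of $\mathcal{C}(N)$, and by the forward argument it is the only one.

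I do not anticipate a genuine obstacle: once Lemmas \ref{irrc} and \ref{axaa} are available the argument is essentially bookkeeping. The one place needing care is the uniqueness half of the equivalence, where it is not enough to exhibit $N^{\omega}$ as \emph{a} generic point; one must also show no other element can serve. This is supplied precisely by the identity $s^{\omega}=N^{\omega}$ together with $s=s^{\omega}$ from Lemma \ref{axaa}, so the plan hinges on noticing that those two lemmas combine to pin down the generic point exactly as $N^{\omega}$.
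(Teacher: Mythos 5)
Your proposal is correct as a proof of the statement as written, and your forward direction is essentially the paper's: both pin down any generic point $L$ of $\mathcal{C}(N)$ as $N^{\omega}$ via $\mathcal{C}(N)=\overline{\{L\}}=\mathcal{C}(L)$ together with Lemma \ref{axaa}. The converse is where you genuinely diverge. You argue locally: from $N^{\omega}\in\mathcal{C}(N)$ you conclude $\mathcal{C}(N)=\mathcal{C}(N^{\omega})=\overline{\{N^{\omega}\}}$ directly, which is cleaner, gives a per-set equivalence, and in fact shows the irreducibility hypothesis is redundant in that direction (a point closure is automatically irreducible). The paper instead assumes the containment for \emph{all} nonempty irreducible subbasis closed sets and proves a strictly stronger conclusion, namely sobriety of the whole space: given an arbitrary nonempty irreducible closed subset $E=\bigcap_{i\in I}\bigl(\bigcup_{\alpha=1}^{n_i}\mathcal{C}(N_{i\alpha})\bigr)$, irreducibility selects from each finite union a single $N_i$ with $E\subseteq\mathcal{C}(N_i)\subseteq\bigcup_{\alpha=1}^{n_i}\mathcal{C}(N_{i\alpha})$, Lemma \ref{bpvs}(3) collapses $\bigcap_{i\in I}\mathcal{C}(N_i)$ to the single subbasis set $\mathcal{C}\left(\sum_{i\in I}N_i\right)$, and the hypothesis then supplies the generic point $\left(\sum_{i\in I}N_i\right)^{\omega}$, with uniqueness from Proposition \ref{t0}. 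This stronger converse (generic points for \emph{all} irreducible closed sets, not just subbasis ones) is exactly what the paper invokes afterwards, in the corollary following the theorem and in item (3) of the proof of Theorem \ref{pss}; with your version alone, those applications would still need the paper's reduction of arbitrary irreducible closed sets to subbasis ones. In short, your route buys economy and a sharper per-set statement; the paper's route buys the sobriety conclusion it actually needs downstream.
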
 

\begin{proof}
Let $\mathcal{C}(N)$ be a nonempty irreducible subbasis closed subset of a structure space $\mathcal{D}(M)$ such that $\mathcal{C}(N)$ has a unique generic point $L$. This implies that 
\[\mathcal{C}(N)=\overline{L}=\mathcal{C}(L).\] By Lemma \ref{axaa}, this implies that $L=N^{\omega}\in \mathcal{D}(M)$. 
The proof of the converse is more intriguing. Let every nonempty irreducible subbasis  closed set $\mathcal{C}(N)$ contains $N^{\omega}$ and let $E$ be an irreducible closed subset of $\mathcal{D}(M)$. Then \[E=\bigcap_{i\in I}\left( \bigcup_{\alpha=1}^{n_i} \mathcal{C}(N_{i\alpha})\right),\] for some  submodules $N_{ij}$ of $M$. Since $E$ is irreducible, for every $i\in I,$ there exists a submodule $N_i$ of $M$ such that \[E\subseteq \mathcal{C}(N_i)\subseteq \bigcup_{\alpha=1}^{n_i} \mathcal{C}(N_{i\alpha}),\] and thus  \[E=\bigcap_{i\in I}\mathcal{C}(N_i)=\mathcal{C}\left(\sum_{i\in I}N_i \right)=\mathcal{C}\left(\sum_{i\in I}N_i \right)^{\omega}.\] Since $\left(\sum_{i\in I}N_i \right)^{\omega}\in \mathcal{D}(M)$, we have $E=\overline{\left(\sum_{i\in I}N_i \right)^{\omega}}$. The uniqueness part of the claim follows from Proposition \ref{t0}, and this proves the theorem.
\end{proof}

\smallskip
\begin{corollary}
Every nonempty irreducible closed subset of structure spaces of proper, prime, minimal prime submodule has a unique generic point.
\end{corollary}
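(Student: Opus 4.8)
The plan is to derive this corollary directly from Theorem \ref{sob}, which tells us that a structure space has the ``unique generic point'' property precisely when every nonempty irreducible subbasis closed set $\mathcal{C}(N)$ contains $N^{\omega}$. So for each of the three classes---proper, prime, and minimal prime submodules---it suffices to check that whenever $\mathcal{C}(N)$ is nonempty and irreducible, the element $N^{\omega}=\bigcap\{L\mid L\in\mathcal{C}(N)\}$ lies in the class $\mathcal{D}(M)$ under consideration. By Lemma \ref{axaa}, if $N^{\omega}$ is already known to belong to $\mathcal{D}(M)$ then it equals the generic point, so the whole matter reduces to a membership verification for the intersection $N^{\omega}$.

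First I would dispose of the class of proper submodules. Here $\mathcal{D}(M)$ is \emph{all} proper submodules, so I only need $N^{\omega}$ to be a proper submodule whenever $\mathcal{C}(N)$ is nonempty. An arbitrary intersection of submodules is a submodule, and $\mathcal{C}(N)\neq\emptyset$ means there is some proper $L$ with $N\subseteq L\subsetneq M$; since $N^{\omega}\subseteq L\neq M$, the intersection is proper. Thus $N^{\omega}\in\mathcal{D}(M)$ automatically, and Theorem \ref{sob} applies. In fact, by Corollary \ref{spiir}, every nonempty subbasis closed set of this space is irreducible, so the hypothesis of Theorem \ref{sob} is met on the nose.

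For the prime and minimal prime cases the point is that $N^{\omega}$ inherits the defining property. For the prime case I would use that $\mathcal{C}(N)$ consists of prime submodules containing $N$, so $N^{\omega}$ is an intersection of prime submodules; the key step is to show this intersection is again prime \emph{when $\mathcal{C}(N)$ is irreducible}. This is exactly the content invoked earlier from \cite{Y11, AH11} for the Zariski topology on prime spectra: irreducibility of the closed set forces the intersection of its members to be prime (an intersection of primes is prime precisely when it cannot be properly written as an intersection witnessing reducibility). For the minimal prime case one argues similarly that the generic point $N^{\omega}$ is a prime submodule which is moreover minimal among those containing $N$, so it lies in the minimal-prime class. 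I expect the prime/minimal-prime verification to be the only substantive step---the proper-submodule case is immediate---and the main obstacle is justifying that irreducibility of $\mathcal{C}(N)$ upgrades a bare intersection of primes into a prime submodule; this is precisely where the cited characterizations of \cite{Y11} and \cite{AH11} do the work, and invoking them closes the argument for all three classes.
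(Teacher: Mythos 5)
Your reduction to Theorem \ref{sob} is exactly the route the paper intends, and your treatment of the proper-submodule case is correct (even more directly: if $\mathcal{C}(N)\neq\emptyset$ then $N$ itself is proper, hence $N\in\mathcal{D}(M)$, and Lemma \ref{axaa} gives $N^{\omega}=N\in\mathcal{C}(N)$). The prime case, however, rests on citations that do not apply: the characterizations in \cite{Y11} and \cite{AH11} are proved for \emph{top} modules carrying the Zariski topology, and the paper itself warns, just before Lemma \ref{irrc}, that no such characterization is available for an arbitrary structure space --- which is precisely the setting of this corollary. This gap is fixable by a direct argument in the subbasis-generated topology: if $rm\in N^{\omega}$ with $m\notin N^{\omega}$ and $rM\not\subseteq N^{\omega}$, then every $P\in\mathcal{C}(N)$ contains $m$ or contains $rM$, so $\mathcal{C}(N)=\left(\mathcal{C}(N)\cap\mathcal{C}(\langle m\rangle)\right)\cup\left(\mathcal{C}(N)\cap\mathcal{C}(rM)\right)$ exhibits $\mathcal{C}(N)$ as a union of two properly smaller closed sets, contradicting irreducibility; hence $N^{\omega}$ is a proper prime submodule containing $N$ and so lies in $\mathcal{C}(N)$. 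You should supply this argument rather than lean on the top-module results.

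The minimal-prime case is a genuine error, not a citation issue. In this paper a minimal prime submodule is a submodule that is both prime and \emph{minimal}, i.e., nonzero and containing no other nonzero submodule of $M$; it is not ``a prime minimal among those containing $N$.'' Proving that $N^{\omega}$ is the least prime containing $N$ therefore does not place it in the class $\mathcal{D}(M)$ at hand. Worse, membership is nearly impossible except in degenerate cases: if $\mathcal{C}(N)$ contains two distinct minimal primes $L_{1}\neq L_{2}$, minimality forces $L_{1}\cap L_{2}=0$, so $N^{\omega}=0\notin\mathcal{D}(M)$; thus the criterion of Theorem \ref{sob} can hold only if every nonempty irreducible subbasis closed set is a singleton, a statement your argument never addresses and which fails in general. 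For instance, for the $\mathds{Z}$-module $M=\bigoplus_{i\in\mathds{N}}\mathds{Z}/p\mathds{Z}$, every submodule generated by a single nonzero element is a minimal prime submodule, distinct ones meet in $0$, and the resulting structure space is an infinite set carrying the cofinite topology; the whole space $\mathcal{C}(0)$ is then irreducible and closed but has no generic point, since singletons are closed. So this part of your proof cannot be repaired as written (and, with the paper's stated definition of ``minimal prime,'' the corollary itself is in doubt for that class); at minimum a correct argument must engage with the actual definition of the class rather than with primes minimal over $N$.
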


To show sobriety of the structure space $\dm$ of strongly irreducible submodules of a module, our argument goes through multiplicative lattices. Since $\dm$ is a multiplicative lattice with product as intersection, every strongly irreducible submodule becomes a prime element of this lattice, and hence by \cite[Lemma 2.6]{FFJ22}, every nonempty irreducible closed subset of $\dm$ has a unique generic point, where once again the uniqueness of generic points follows from Proposition \ref{t0}. The structure spaces of semiprime and extraordinary submodules are also sober because of the same reason. The following proposition gathers all these conclusions.

\smallskip
\begin{proposition}\label{sirrs} 
Every nonempty irreducible closed subset of structure spaces of strongly irreducible, semiprime, and extraordinary submodules has a unique generic point.
\end{proposition}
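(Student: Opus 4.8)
The plan is to derive all three soberness statements from the criterion already isolated in Theorem \ref{sob}. Its converse direction tells us that a structure space is sober as soon as every nonempty irreducible subbasis closed set $\mathcal{C}(N)$ contains $N^{\omega}=\bigcap\{L\mid L\in\mathcal{C}(N)\}$. Since every member of a nonempty $\mathcal{C}(N)$ contains $N$ and is a proper submodule, one has $N\subseteq N^{\omega}$ automatically and $N^{\omega}$ is proper (being contained in at least one proper $L\in\mathcal{C}(N)$). Hence in each of the three cases the only thing I would need to verify is that $N^{\omega}$ again lies in the class under consideration; then $N^{\omega}\in\mathcal{C}(N)$ and Theorem \ref{sob} applies. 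This is exactly the content of the multiplicative-lattice reformulation suggested before the statement: the relevant submodules are the prime elements of a suitable lattice with intersection as product, so that \cite[Lemma 2.6]{FFJ22} yields sobriety; I would instead argue directly, which makes the closure-under-the-class step transparent.

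For the semiprime case the verification requires no irreducibility whatsoever. A semiprime submodule is by definition an intersection of prime submodules, so any intersection of semiprime submodules is again an intersection of primes and is therefore semiprime. Consequently $N^{\omega}$, being the intersection of the nonempty family of semiprime submodules containing $N$, is semiprime and proper, whence $N^{\omega}\in\mathcal{C}(N)$ and the conclusion follows at once.

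The strongly irreducible and the extraordinary cases I would treat by a single dichotomy argument that genuinely uses topological irreducibility. Assume $\mathcal{C}(N)$ is nonempty and irreducible, and suppose $A\cap B\subseteq N^{\omega}$, where $A,B$ are arbitrary submodules in the strongly irreducible case and arbitrary semiprime submodules in the extraordinary case. For every $L\in\mathcal{C}(N)$ we have $A\cap B\subseteq N^{\omega}\subseteq L$, so the defining property of $L$ forces $A\subseteq L$ or $B\subseteq L$; therefore $\mathcal{C}(N)\subseteq\mathcal{C}(A)\cup\mathcal{C}(B)$. As $\mathcal{C}(A)$ and $\mathcal{C}(B)$ are closed and $\mathcal{C}(N)$ is irreducible, we obtain $\mathcal{C}(N)\subseteq\mathcal{C}(A)$ or $\mathcal{C}(N)\subseteq\mathcal{C}(B)$, which is precisely $A\subseteq N^{\omega}$ or $B\subseteq N^{\omega}$. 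This says exactly that $N^{\omega}$ is strongly irreducible (respectively extraordinary), so once more $N^{\omega}\in\mathcal{C}(N)$.

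The main obstacle, and the reason semiprimeness enters the extraordinary case, is that neither the strongly irreducible nor the extraordinary submodules are closed under intersection, so $N^{\omega}$ need not belong to the class for a general $N$; the argument must convert topological irreducibility back into the algebraic closure property, which is exactly what the dichotomy step accomplishes. In the lattice reformulation this same difficulty reappears as the need to confirm that the extraordinary submodules are the prime elements of the lattice of semiprime submodules (with intersection as product) and that its hull-kernel topology coincides with $\tau_{\mathcal{D}(M)}$. Finally, uniqueness of the generic point in all three cases is immediate from the $T_0$ property of Proposition \ref{t0}, since in a $T_0$-space two points with equal closures coincide.
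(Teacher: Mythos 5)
Your proof is correct, but it follows a genuinely different route from the paper's. The paper handles all three classes with a single lattice-theoretic citation: the submodules of $M$ form a multiplicative lattice with intersection as product, strongly irreducible submodules are exactly the prime elements of that lattice, and sobriety is then imported from \cite[Lemma 2.6]{FFJ22}, with the semiprime and extraordinary cases dispatched as holding ``because of the same reason'' and uniqueness coming from Proposition \ref{t0}. You instead stay entirely inside the paper's own machinery: you invoke the converse direction of Theorem \ref{sob}, which reduces sobriety to the purely algebraic task of checking that $N^{\omega}\in\mathcal{C}(N)$ for every nonempty irreducible subbasis closed set, and you verify this class by class --- via closure of semiprime submodules under arbitrary intersections (where, as you rightly note, irreducibility plays no role), and via the dichotomy argument for the strongly irreducible and extraordinary cases, where $A\cap B\subseteq N^{\omega}$ gives $\mathcal{C}(N)\subseteq\mathcal{C}(A)\cup\mathcal{C}(B)$, topological irreducibility forces $\mathcal{C}(N)\subseteq\mathcal{C}(A)$ or $\mathcal{C}(N)\subseteq\mathcal{C}(B)$, and these containments translate back into $A\subseteq N^{\omega}$ or $B\subseteq N^{\omega}$. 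All steps check out, including the properness of $N^{\omega}$ and the equivalence $\mathcal{C}(N)\subseteq\mathcal{C}(A)\Leftrightarrow A\subseteq N^{\omega}$. What each approach buys: the paper's is a two-line proof that situates the result in an abstract prime-spectrum framework, but it leaves implicit both the identification of the lattice-theoretic hull-kernel topology with $\tau_{\mathcal{D}(M)}$ and the choice of lattice making the extraordinary (and semiprime) submodules prime elements; your argument is self-contained, makes the class-closure property doing the real work completely explicit, and exposes the asymmetry among the three classes that the paper's ``same reason'' glosses over. One small point worth flagging: the statement of Theorem \ref{sob} literally concerns generic points of subbasis closed sets only, whereas Proposition \ref{sirrs} concerns all irreducible closed sets; your reading (that the converse direction yields full sobriety) is justified by the proof of Theorem \ref{sob}, and the paper itself uses it that way in the corollary following that theorem, so this is a fair use rather than a gap.
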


Our next goal is to prove the spectrality of the structure space of a specific class of submodules of a module, and this is an extension of Theorem 2.9 of \cite{G23}.

\smallskip
\begin{theorem}\label{pss}
The structure space of proper submodules of a module is spectral.
\end{theorem}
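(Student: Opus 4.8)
<br>

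The plan is to verify the four defining properties of a spectral space for the structure space $\dm$ of proper submodules: $T_0$, quasi-compactness, closure of the quasi-compact open sets under finite intersection together with their forming a basis, and sobriety. By a theorem of Hochster, these four conditions characterize spectral spaces, so the proof reduces to checking each in turn, drawing on the machinery already assembled.

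First I would dispose of the properties that are essentially immediate from earlier results. The $T_0$ axiom holds by Proposition \ref{t0}. Sobriety is the delicate topological ingredient, but for the class of \emph{all} proper submodules it follows from Theorem \ref{sob}: every nonempty subbasis closed set $\mathcal{C}(N)$ of the structure space of proper submodules is irreducible by Corollary \ref{spiir}, and since $N^{\omega}$ is a proper submodule whenever $\mathcal{C}(N)\neq\emptyset$, it lies in $\dm=\mathcal{C}(N)$; hence Theorem \ref{sob} applies and yields a unique generic point for every irreducible closed set. This is where I expect the argument to lean hardest on the preparatory lemmas, and the key point to nail down is that for the full class of proper submodules the hypothesis $N^{\omega}\in\mathcal{C}(N)$ of Theorem \ref{sob} is automatically satisfied.

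For quasi-compactness I cannot directly invoke Theorems \ref{comp}, \ref{csb}, or \ref{niqc}, since $M$ is an arbitrary module here; instead I would argue directly. The whole space is $\dm=\mathcal{C}(0)$, and the decisive observation is that $\mathcal{C}(N)=\emptyset$ forces $N=M$ \emph{for the class of all proper submodules} (the converse discussed in \textsection\ref{ssms} holds unconditionally here, because any proper submodule $N\neq M$ is itself a proper submodule and so belongs to $\mathcal{C}(N)$). Then the Alexander Subbasis argument runs exactly as in the proof of Theorem \ref{comp}: if $\bigcap_{\lambda}\mathcal{C}(N_\lambda)=\emptyset$, then $\mathcal{C}(\sum_\lambda N_\lambda)=\emptyset$ by Lemma \ref{bpvs}(\ref{b}), whence $\sum_\lambda N_\lambda=M$; the identity-type containment then gives a finite subsum equal to $M$, producing a finite subcollection with empty intersection.

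The remaining and most technical step is to produce a basis of quasi-compact open sets that is closed under finite intersection. I would take as candidate open basis the complements $\dm\setminus\mathcal{C}(N)$ for $N$ ranging over finitely generated submodules of $M$. Using Lemma \ref{bpvs}(\ref{b}) one sees that finite intersections of subbasis closed sets $\mathcal{C}(N_1),\dots,\mathcal{C}(N_k)$ equal $\mathcal{C}(N_1+\cdots+N_k)$, so complements of such finite intersections are exactly the sets in my candidate basis and are automatically closed under finite \emph{union} on the open side; dually the family of quasi-compact opens is closed under finite intersection. The quasi-compactness of each basic open $\dm\setminus\mathcal{C}(N)$ with $N$ finitely generated is the crux: here I would repeat the finite-generation argument of Theorem \ref{comp} relativized to the open subspace, exploiting that a finitely generated $N$ makes the relevant sums reach $M$ after finitely many terms. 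I expect this last verification—that the basic opens are themselves quasi-compact and that they genuinely form a basis separating points from closed sets—to be the main obstacle, since it is the one place where the combinatorics of finite generation must be married to the subbasis structure rather than simply quoted from a previous theorem.
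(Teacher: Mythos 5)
Your $T_0$ and sobriety steps are fine and agree with the paper (sobriety via Theorem \ref{sob}, Corollary \ref{spiir}, and the observation that $N^{\omega}\in\C(N)$ whenever $\C(N)\neq\emptyset$). The two remaining steps, however, contain genuine gaps. First, quasi-compactness: you pass from $\sum_{\lambda}N_{\lambda}=M$ to a finite subsum equal to $M$ via an ``identity-type containment,'' but a module has no identity element; that passage is exactly what finite generation of $M$ buys in Theorem \ref{comp}, and without it the claim is false. Concretely, for $M=\bigoplus_{i\geqslant 1}\mathds{Z}e_i$ and $N_i=\mathds{Z}e_i$, the subbasis closed sets $\C(N_i)$ of the proper-submodule space have the finite intersection property (each finite intersection contains the proper submodule $N_1+\dots+N_n$), yet $\bigcap_i\C(N_i)=\C(M)=\emptyset$; so this space is not even quasi-compact, and no argument can repair your step for an arbitrary module. (The paper's own proof quietly has the same dependence: its step (2) cites Theorem \ref{comp}, whose hypothesis is that $M$ is finitely generated.)

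Second, your basis of quasi-compact opens does not work. The sets $\dm\setminus\C(N)$ form an open \emph{subbasis}; a finite intersection of them is the complement of a finite \emph{union} $\C(N_1)\cup\dots\cup\C(N_k)$, and Lemma \ref{bpvs}(\ref{b}) only converts \emph{intersections} of the $\C(N_i)$ into a single $\C(N_1+\dots+N_k)$ --- it says nothing about unions. Unless $M$ is a top module, $\C(N_1)\cup\C(N_2)$ need not equal any $\C(N'')$, so your candidate family is not closed under finite intersections and is not a basis; the word ``dually'' in your argument is a non sequitur, since closure of a family of closed sets under finite unions of a special form does not dualize to closure of their complements under finite intersections. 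This failure is precisely the module-versus-ring phenomenon the paper is organized around, and it is why the paper never verifies Hochster's basis axiom directly: instead it proves Lemma \ref{cso} (a quasi-compact, open, sober subspace of a spectral space is spectral), gets spectrality of the full lattice $\sm$ from Priestley's theorem on algebraic lattices, notes that $\{M\}=\C(M)$ is closed in $\sm$ so that $\dm$ is open, and then \emph{inherits} the basis of quasi-compact opens from $\sm$. Some device of this kind (or the top-module hypothesis) is needed to replace your third step.
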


\subsubsection{Spectral spaces}
Recall that a topological space is called \emph{spectral} (see \cite{H69}), if it quasi-compact,  admitting a basis
of quasi-compact open subspaces that are closed under finite intersections, and if every nonempty irreducible closed subset of them has a unique generic point. Our proof of Theorem \ref{pss} is self-contained in the sense that it is constructible topology-independent. Moreover, we have adapted a technique that avoids the checking of the existence of a
basis of quasi-compact open subspaces that are closed under finite intersections, which rather follows from the following key lemma.

\smallskip
\begin{lemma}\label{cso}
Let $X$ be a spectral space and let $S$ be a subspace of $X$ having the following properties:
\begin{enumerate}
\item $S$ is quasi-compact.

\item $S$ is an open subspace of $X$.

\item Every nonempty irreducible closed subset of $S$ has a unique generic point.
\end{enumerate} 
Then $S$ is a spectral space.
\end{lemma}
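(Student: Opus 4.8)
The plan is to verify directly the three defining properties of a spectral space for $S$: quasi-compactness, sobriety, and the existence of a basis of quasi-compact open subspaces that is closed under finite intersections. Two of these come for free: quasi-compactness of $S$ is hypothesis (1), and sobriety---the requirement that every nonempty irreducible closed subset has a unique generic point---is hypothesis (3). Hence the whole content of the lemma reduces to producing the required basis, and this is precisely where the openness hypothesis (2) enters.

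To build the basis I would first fix, since $X$ is spectral, a basis $\mathcal{B}$ of $X$ consisting of quasi-compact open subsets and closed under finite intersections. I then set $\mathcal{B}_S=\{B\in\mathcal{B}\mid B\subseteq S\}$ and claim this is a basis of $S$ with all the desired features. The key observation is that because $S$ is open in $X$, a subset of $S$ is open in the subspace $S$ if and only if it is open in $X$; consequently, for any open $U\subseteq S$ and any $x\in U$ there is some $B\in\mathcal{B}$ with $x\in B\subseteq U$, and then automatically $B\subseteq S$, so that $B\in\mathcal{B}_S$. This shows $\mathcal{B}_S$ is indeed a basis for the topology of $S$.

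It remains to check that each member of $\mathcal{B}_S$ is a quasi-compact open subspace of $S$ and that $\mathcal{B}_S$ is closed under finite intersections. Openness is clear, since $B=B\cap S$ is open in $S$. For quasi-compactness I would invoke the transitivity of the subspace topology: for $B\subseteq S\subseteq X$ the topology induced on $B$ by $S$ coincides with that induced by $X$, and quasi-compactness is an intrinsic property of a space, so each $B\in\mathcal{B}_S$, being quasi-compact in $X$, remains quasi-compact as a subspace of $S$. Closure under finite intersection is immediate: if $B_1,B_2\in\mathcal{B}_S$, then $B_1\cap B_2\in\mathcal{B}$ (as $\mathcal{B}$ is closed under intersections) and $B_1\cap B_2\subseteq S$, whence $B_1\cap B_2\in\mathcal{B}_S$.

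I expect no serious technical obstacle; the only point requiring genuine care is recognizing why hypothesis (2) is indispensable. Without openness the natural candidate $\{B\cap S\mid B\in\mathcal{B}\}$ would still form a basis of $S$, but its members need not be quasi-compact, and the argument would collapse. Openness is exactly what allows one to retain the \emph{uncut} basis elements lying inside $S$, thereby preserving their quasi-compactness. Assembling the three verified properties---quasi-compactness, sobriety, and the basis $\mathcal{B}_S$---then yields that $S$ is a spectral space.
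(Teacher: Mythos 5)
Your proof is correct and takes essentially the same approach as the paper: both arguments note that quasi-compactness and sobriety are hypotheses (1) and (3), and then use the openness of $S$ to conclude that the (quasi-compact) open subsets of $S$ are exactly the (quasi-compact) open subsets of $X$ contained in $S$, which yields the required basis closed under finite intersections. The only cosmetic difference is that the paper works with the collection of \emph{all} quasi-compact open subsets of $X$, while you restrict a fixed basis $\mathcal{B}$ witnessing spectrality of $X$; both come to the same thing.
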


\begin{proof}
Since $S$ is quasi-compact and sober, all we need is to prove that the set $\mathcal{B}_{\scriptscriptstyle S}$ of quasi-compact open subsets of $S$ forms a basis of a topology that is closed under finite intersections. It is easy to check the following fact. A subset of $S$ is open in $S$ if and only if it is open in $X$, and therefore, a subset of $S$ belongs to $\mathcal{B}_{\scriptscriptstyle S}$ if and only if it belongs to $\mathcal{B}_{\scriptscriptstyle X}.$

Let $O$ be an open subset of $S$. Since $O$ is also open in $X$, we have $O=\cup\, \mathcal{U},$ for some subset $\mathcal{U}$ of $\mathcal{B}_{\scriptscriptstyle X}.$ But each element of $\mathcal{U}$ being a subset of $O$ is a subset of $S$, and it belongs to $\mathcal{B}_{\scriptscriptstyle S}.$ Therefore every open subset of $S$ can be presented as a union of quasi-compact open subsets of $S$. Now it remains to prove that $\mathcal{B}_{\scriptscriptstyle S}$ is closed under finite intersections, but this immediately follows from the fact that $\mathcal{B}_{\scriptscriptstyle X}$ is closed under finite intersections. 
\end{proof}
\smallskip

\noindent 
\textit{Proof of Theorem \ref{pss}}.
In order to apply Lemma \ref{cso} for the structure space $\dm$ of proper submodules of a module, we need to achieve the following:
 
\begin{enumerate}
\item The structure space $\sm$ of all submodules of a module $M$ is spectral.

\item \label{oco} $\dm$ is quasi-compact.

\item\label{tso} Every nonempty irreducible closed subset of $\dm$ has a unique generic point.

\item\label{top}  $\dm$ is an open subspace of the structure space $\sm$.
\end{enumerate}
Now, (2) and (3) respectively follow from Proposition \ref{comp} and Proposition \ref{sob}, whereas (1) follows from the application of Theorem 4.2 from \cite{P94} on the algebraic lattice $\sm.$ Therefore, what remains is to show (4). Since $M\in\sm,$ by Lemma \ref{irrc}, we have $M=\mathcal{C}(M)=\overline{M},$ and therefore, $\{M\}$ is closed. This implies that $\dm$ is open, and this completes the proof. 
\hfill \qedsymbol
\smallskip

If our modules are Noetherian and a structure space is sober, then, thanks to Theorem \ref{niqc}, we obtain further examples of spectral spaces.

\smallskip
\begin{corollary}\label{cor}
Let $R$ be a Noetherian module and let $\dm$ be a structure space. Then $\dm$ is spectral if and only if it is sober. 
\end{corollary}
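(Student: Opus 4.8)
The plan is to prove Corollary \ref{cor} as a direct consequence of the machinery already assembled, packaging together Theorem \ref{niqc}, the definition of spectral space, and the characterization of sobriety from Theorem \ref{sob}. Recall that a spectral space must satisfy three things: quasi-compactness, the existence of a basis of quasi-compact open subsets closed under finite intersections, and sobriety (every nonempty irreducible closed subset has a unique generic point). The forward direction is essentially definitional: if $\dm$ is spectral, then by definition every nonempty irreducible closed subset has a unique generic point, which is precisely the statement that $\dm$ is sober. So the content lies entirely in the converse.

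For the converse, assume $M$ is Noetherian and $\dm$ is sober. The strategy is to invoke Lemma \ref{cso}, which reduces spectrality of a subspace $S$ of a spectral space $X$ to three checkable conditions: quasi-compactness of $S$, openness of $S$ in $X$, and sobriety of $S$. First I would take $X=\sm$, the structure space of all submodules, which is spectral (this is point (1) in the proof of Theorem \ref{pss}, following from the application of Theorem 4.2 of \cite{P94} to the algebraic lattice $\sm$; note that Noetherianity is not needed here). Then I would verify the three hypotheses of Lemma \ref{cso} for $S=\dm$: quasi-compactness follows immediately from Theorem \ref{niqc}, since $M$ is Noetherian; sobriety is exactly our standing assumption; and openness of $\dm$ in $\sm$ follows as in the proof of Theorem \ref{pss}, because $M\in\sm$ gives $\overline{M}=\mathcal{C}(M)=\{M\}$ by Lemma \ref{irrc}, so $\{M\}$ is closed in $\sm$ and hence $\dm=\sm\setminus\{M\}$ is open. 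With all three conditions met, Lemma \ref{cso} yields that $\dm$ is spectral.

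One point I would take care to address is whether the openness argument applies to an arbitrary distinguished class $\dm$ or only to the class of all proper submodules. In the proof of Theorem \ref{pss} the space $\dm$ of \emph{proper} submodules is literally $\sm\setminus\{M\}$, so openness is transparent. For a general distinguished class $\dm$, one must note that $\dm$ is still a subspace of $\sm$ whose only possible ``extra'' point relative to being a subset of the proper submodules is absent, so $\dm\subseteq\sm\setminus\{M\}$ sits inside the open set $\sm\setminus\{M\}$; since the subspace topology on $\dm$ induced from $\sm$ agrees with $\tau_{\dm}$ and Lemma \ref{cso} only requires $\dm$ to be an open subspace of a spectral space, I would either restrict attention to the nested chain $\dm\subseteq\sm\setminus\{M\}\subseteq\sm$ (applying Lemma \ref{cso} with $X=\sm\setminus\{M\}$, itself spectral by Theorem \ref{pss}) or verify directly that $\dm$ is open in $\sm$.

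The main obstacle I anticipate is precisely this openness condition for a general class $\dm$, since unlike quasi-compactness and sobriety it is not supplied wholesale by a previously proved theorem. The cleanest resolution is to apply Lemma \ref{cso} not with $X=\sm$ but with $X=\dm'$ the structure space of \emph{all proper} submodules, which is spectral by Theorem \ref{pss}; then for a distinguished class $\dm$ one needs $\dm$ to be open in $\dm'$, and this again reduces to checking that the complement of $\dm$ in $\dm'$ is closed, i.e.\ a finite union of subbasis closed sets $\mathcal{C}(N)$. I expect this to be the genuinely delicate step, and it may require the additional hypothesis that $\dm$ be an open subspace, which is why the corollary as stated is most naturally read for the class of proper submodules themselves, where quasi-compactness plus sobriety plus the automatic openness inside $\sm$ give spectrality with no further work.
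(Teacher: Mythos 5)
Your forward direction is fine, but the converse has a genuine gap that you yourself identified and then did not close: Lemma \ref{cso} requires $\dm$ to be an \emph{open} subspace of a spectral ambient space, and for a general distinguished class this simply fails. The argument in Theorem \ref{pss} gives openness only for the class of \emph{all} proper submodules, because there $\dm=\sm\setminus\{M\}$ and $\{M\}$ is closed; for a class such as the prime, primary, or maximal submodules, the complement of $\dm$ inside $\sm$ (or inside the space of all proper submodules) is in general not closed, so neither instantiation of Lemma \ref{cso} that you propose applies. Your fallback --- reading the corollary as being ``most naturally'' about the class of proper submodules --- does not rescue the proof: in that case spectrality is already Theorem \ref{pss}, with no Noetherian hypothesis and no sobriety assumption needed, so the corollary as stated (for an arbitrary structure space $\dm$) is left unproved.

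The paper's route avoids openness entirely and is the idea your proposal is missing. Under the Noetherian hypothesis, Theorem \ref{niqc} and its corollary give that $\dm$ is quasi-compact and, moreover, a \emph{Noetherian topological space}. In a Noetherian space every open subset is quasi-compact, so the quasi-compact open subsets are exactly all the open subsets; these trivially form a basis closed under finite intersections. Hence the first two conditions in the definition of a spectral space hold automatically, and the only condition with content is sobriety, which is exactly the assumed hypothesis. This verifies spectrality directly from the definition, for every distinguished class $\dm$, with no ambient spectral space and no openness condition. If you want to salvage your subspace strategy, you would have to replace Lemma \ref{cso} by a statement that does not demand openness, and that is precisely what topological Noetherianity buys.
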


\subsection{Connectedness}
Now we turn our attention to the connectedness of structure spaces. The usual notion of connectedness does not have a direct formulation in terms of subbasis closed sets. We, therefore, introduce a different kind of connectedness (in terms of subbasis closed sets) of structure spaces that is related to the conventional notion of connectedness.
We say a closed subbasis $\mathcal{S}$ of a topological space $X$ \emph{strongly disconnects} $X$  if there exist two non-empty subsets $A$ and $B$ of $\mathcal{S}$ such that $X=A\cup B$ and $A\cap B=\emptyset$.

From the above definition, it is clear that if some closed subbasis strongly disconnects a space, then the space is disconnected. Moreover, if a space is disconnected, then some closed subbasis strongly disconnects it. But this does not imply that every closed subbasis strongly disconnects the space. However, we have the following result, analogous to Theorem 3.21 and Lemma 3.22 of \cite{DG22}.

\smallskip
\begin{lemma}\label{th1}
If a quasi-compact space is disconnected, then every closed basis that is closed under finite intersections strongly disconnects the space. Moreover, for any structure space $\mathcal{D}(M)$, the closed basis is closed under binary intersections.
\end{lemma}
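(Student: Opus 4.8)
The plan is to prove the two assertions separately, treating the ``moreover'' clause as a structural fact about $\dm$ that is essentially independent of the quasi-compactness argument. For the main statement, I would start from the hypothesis that the quasi-compact space $X$ is disconnected, so there is a genuine partition $X = U \sqcup V$ into two non-empty sets that are simultaneously open and closed. The goal is to refine this into a partition whose two parts are unions of members of a given closed basis $\mathcal{B}$ that is closed under finite intersections. First I would express $U$ and $V$ in terms of $\mathcal{B}$: since $U$ and $V$ are both closed, each is an intersection of finite unions of basic closed sets, but more usefully, since $U$ and $V$ are both open, their complements $V$ and $U$ are closed and hence each is a union of members of $\mathcal{B}$ (the defining property of a closed basis is that every closed set is an intersection of basic closed sets, so I must be careful about which direction I use).

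The cleaner route is to use openness: write $U = \bigcap_{i} B_i$ and $V = \bigcap_j B_j'$ as intersections of basic closed sets drawn from $\mathcal{B}$. Because $U \cap V = \emptyset$, the combined family $\{B_i\} \cup \{B_j'\}$ has empty total intersection, and here quasi-compactness enters: by the finite intersection property there is a finite subfamily already intersecting to $\emptyset$. Closure of $\mathcal{B}$ under finite intersections then lets me collapse the finitely many $B_i$ into a single member $A \in \mathcal{B}$ with $U \subseteq A$, and likewise collapse the finitely many $B_j'$ into a single $B \in \mathcal{B}$ with $V \subseteq B$, arranged so that $A \cap B = \emptyset$. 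Since $U \cup V = X$ and $A \cap B = \emptyset$ forces $A \subseteq V^c = U$ and $B \subseteq U^c = V$, I get $A = U$ and $B = V$, exhibiting $A, B \in \mathcal{B}$ with $X = A \cup B$ and $A \cap B = \emptyset$ and both non-empty, which is exactly the strong disconnection. I expect the main obstacle to be bookkeeping the quantifiers so that the finite subfamily extracted from quasi-compactness really covers \emph{both} $U$ and $V$ simultaneously rather than only separating one pair of basic sets; the trick is to apply compactness to the single empty-intersection family and then split the resulting finite set according to which original part each basic set came from.

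For the ``moreover'' clause, I would verify directly that the closed basis of $\dm$ is closed under binary (hence finite) intersections. The natural closed basis consists of the subbasis sets $\mathcal{C}(N)$ together with their finite unions; a closed basis element is a finite union $\bigcup_{i=1}^m \mathcal{C}(N_i)$. To check closure under binary intersection I would compute $\bigl(\bigcup_{i} \mathcal{C}(N_i)\bigr) \cap \bigl(\bigcup_{j} \mathcal{C}(N_j')\bigr) = \bigcup_{i,j}\bigl(\mathcal{C}(N_i) \cap \mathcal{C}(N_j')\bigr)$, so it suffices to show each pairwise intersection $\mathcal{C}(N) \cap \mathcal{C}(N')$ is again a basis element. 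By Lemma~\ref{bpvs}(\ref{b}), taking the index set to be a two-element set gives $\mathcal{C}(N) \cap \mathcal{C}(N') = \mathcal{C}(N + N')$, which is a single subbasis closed set and hence a (degenerate) finite union, i.e.\ again a member of the closed basis. This establishes that the closed basis of $\dm$ is closed under binary intersections, completing the lemma.

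One subtlety to flag: the first part as stated speaks of a closed \emph{basis} closed under finite intersections, while the strong disconnection in the definition is phrased via a closed \emph{subbasis}; since every basis is in particular a subbasis, and the exhibited $A, B$ lie in $\mathcal{B}$, the conclusion is compatible with the definition of strong disconnection, so no additional argument is needed there.
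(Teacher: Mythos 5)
The paper itself gives no proof of this lemma---it is justified only by analogy with Theorem 3.21 and Lemma 3.22 of \cite{DG22}---and your argument correctly supplies the missing one along exactly the intended lines: write the clopen parts $U,V$ as intersections of members of the closed basis, apply quasi-compactness to the combined family (which has empty total intersection), collapse each side to a single basis element using closure under finite intersections, and, for the ``moreover'' clause, use distributivity together with $\mathcal{C}(N)\cap\mathcal{C}(N')=\mathcal{C}(N+N')$ (Lemma \ref{bpvs}(\ref{b})) to see that finite unions of subbasic sets are closed under binary intersections. Two small points to tidy in a final write-up: your remark that the closed sets $U$ and $V$ are each ``a union of members of $\mathcal{B}$'' is backwards (a closed basis writes every closed set as an \emph{intersection} of basic closed sets, which is the route you in fact take, and it uses closedness of $U$ and $V$, not openness); and you should dispose of the degenerate case in which the finite subfamily produced by compactness comes entirely from one side---if it consisted only of $B_i$'s then $U\subseteq\bigcap B_i=\emptyset$, contradicting nonemptiness of $U$, or alternatively one may simply enlarge the finite family by one set from the other side---so that the collapse into two disjoint sets $A,B\in\mathcal{B}$ is well defined.
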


Applying Lemma \ref{th1}, we immediately have the following result.

\smallskip
\begin{theorem}\label{cor1}
Suppose $\mathcal{D}(M)$ is a quasi-compact space. Then  $\mathcal{D}(M)$ is  disconnected if and only if the closed basis strongly disconnects $\mathcal{D}(M)$. 
\end{theorem}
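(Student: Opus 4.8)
The plan is to prove Theorem~\ref{cor1} by combining Lemma~\ref{th1} with the elementary observation connecting strong disconnection to ordinary disconnection that was recorded just before the lemma. The statement is a biconditional, so I would handle the two implications separately and note that the real content lies entirely in the forward direction, which is exactly where quasi-compactness is used.

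First I would dispose of the easy direction. Suppose the closed basis of $\mathcal{D}(M)$ strongly disconnects the space; then by definition there are nonempty subfamilies $A$ and $B$ of the closed basis with $\mathcal{D}(M)=A\cup B$ and $A\cap B=\emptyset$. Since $A$ and $B$ are unions of closed basis elements, each is closed; being complementary they are also open; and both are nonempty. This exhibits a partition of $\mathcal{D}(M)$ into two nonempty disjoint clopen sets, so $\mathcal{D}(M)$ is disconnected. Notice this direction needs no hypothesis at all beyond the definitions—it is the general remark ``if some closed subbasis strongly disconnects a space, then the space is disconnected'' applied to the closed \emph{basis}, viewed as a particular closed subbasis.

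For the forward direction I would assume $\mathcal{D}(M)$ is disconnected and invoke Lemma~\ref{th1} directly. The lemma has two parts: the first asserts that in a quasi-compact space, disconnectedness forces \emph{every} closed basis that is closed under finite intersections to strongly disconnect the space; the second asserts that the closed basis of any structure space $\mathcal{D}(M)$ is indeed closed under binary intersections. So the argument is to apply the second part to see that the closed basis of $\mathcal{D}(M)$ satisfies the closure hypothesis, and then to feed this, together with the standing assumption that $\mathcal{D}(M)$ is quasi-compact, into the first part to conclude that the closed basis strongly disconnects $\mathcal{D}(M)$. This is precisely the statement required.

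The main subtlety—though it is discharged by the cited lemma rather than reproved here—is the passage from ``closed under binary intersections'' to ``closed under finite intersections,'' and more importantly the quasi-compactness step inside Lemma~\ref{th1} that upgrades an arbitrary clopen partition into one witnessed by the chosen basis. In general a disconnection of a space need not be expressible in terms of a prescribed basis, and it is only the quasi-compactness that lets one refine an open cover of each clopen piece down to finitely many basis elements and then amalgamate them (using closure under finite intersections) into the two families $A$ and $B$. Since all of this is packaged in Lemma~\ref{th1}, the proof of Theorem~\ref{cor1} itself reduces to a one-line citation in each direction; I would therefore keep it short, stating that the forward implication is immediate from Lemma~\ref{th1} applied to the closed basis of $\mathcal{D}(M)$, and the reverse implication is the definitional remark above.
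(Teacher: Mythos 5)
Your proposal follows the paper's route exactly: the paper's entire proof of Theorem~\ref{cor1} is the single remark that it follows by ``applying Lemma~\ref{th1},'' i.e.\ the forward implication is Lemma~\ref{th1} (the second part supplying the closure-under-intersections hypothesis for the first, just as you describe), and the reverse implication is the observation, recorded before the lemma, that strong disconnection by any closed subbasis implies disconnection. So structurally your argument and the paper's coincide.

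There is, however, one step in your easy direction that fails as literally written, and it stems from a misreading of the definition of ``strongly disconnects.'' You take $A$ and $B$ to be nonempty \emph{subfamilies} of the closed basis and then assert that ``since $A$ and $B$ are unions of closed basis elements, each is closed.'' An arbitrary union of closed sets need not be closed, so under your reading this inference is invalid --- and indeed, under that reading, strong disconnection would not imply disconnection at all, so the remark you appeal to would be false. The definition must be read with $A$ and $B$ as single \emph{members} of the closed subbasis (two nonempty closed sets belonging to $\mathcal{S}$ that partition the space); this is the reading under which the preceding remark and Lemma~\ref{th1} are true, and it is the definition used in \cite{DG22}, from which the lemma is imported. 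With that reading your easy direction becomes immediate: $A$ and $B$ are closed, complementary, hence clopen and nonempty, so the space is disconnected. In short, the flaw is a definitional misreading rather than a structural one; delete the words ``unions of'' and restrict $A$, $B$ to basis members, and your proof is correct and identical to the paper's.
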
   

A consequence of Lemma \ref{irrc} is the following sufficient condition for a structure space to be connected. 

\smallskip
\begin{theorem}\label{conis}
If the zero submodule is in  $\mathcal{D}(M),$  then the structure space $\mathcal{D}(M)$ is connected.
\end{theorem}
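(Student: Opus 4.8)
The plan is to show that if the zero submodule $0 \in \mathcal{D}(M)$, then $\mathcal{D}(M)$ cannot be written as a disjoint union of two nonempty closed sets, which is the defining property of connectedness. The key observation driving the whole argument is Lemma \ref{bpvs}(\ref{a}), which asserts that $\mathcal{C}(0) = \mathcal{D}(M)$: the zero submodule is contained in \emph{every} member of $\mathcal{D}(M)$, since every submodule contains $0$. Thus if $0 \in \mathcal{D}(M)$, then by Lemma \ref{irrc} the point $0$ has closure $\overline{\{0\}} = \mathcal{C}(0) = \mathcal{D}(M)$, so $0$ is a generic point for the entire space.

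First I would recall the elementary topological fact that a space in which some point has the whole space as its closure is automatically connected. The reason is that such a point lies in every nonempty closed set: if $F$ is a nonempty closed set, then $F$ is a closed set containing some point, but more directly, the closure of $0$ being everything means $0$ belongs to the closure of every point, hence $0$ lies in every nonempty closed set of the space (equivalently, $0$ is in the intersection of all nonempty closed neighborhoods). I would argue this cleanly as follows: suppose for contradiction that $\mathcal{D}(M) = F_1 \cup F_2$ with $F_1, F_2$ nonempty, closed, and disjoint. Then both $F_1$ and $F_2$ are also open. Since $\overline{\{0\}} = \mathcal{D}(M)$, the point $0$ is dense, so every nonempty open set must contain $0$; in particular both $F_1$ and $F_2$ contain $0$, contradicting $F_1 \cap F_2 = \emptyset$.

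The only point requiring a little care is the claim that $0 \in \mathcal{D}(M)$ guarantees $0$ is dense, i.e.\ that $\overline{\{0\}} = \mathcal{D}(M)$. This follows directly by combining two results already established: Lemma \ref{irrc} gives $\overline{\{0\}} = \mathcal{C}(0)$ whenever $0 \in \mathcal{C}(0)$ (and indeed $0 \in \mathcal{C}(0)$ since $0 \subseteq 0$), and Lemma \ref{bpvs}(\ref{a}) gives $\mathcal{C}(0) = \mathcal{D}(M)$. Hence $\overline{\{0\}} = \mathcal{D}(M)$, as needed. There is no substantive obstacle here; the argument is short once the role of $\mathcal{C}(0)$ is identified. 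I do not expect to need any quasi-compactness or sobriety hypotheses, so Theorem \ref{conis} should hold for arbitrary modules $M$ with no further assumptions.

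The single step I would flag as the conceptual crux, rather than a technical obstacle, is recognizing that connectedness here is forced by the existence of a dense point, which is a much stronger and simpler phenomenon than the general connectedness criterion developed in Theorem \ref{cor1}. In particular, this proof bypasses the machinery of strong disconnection entirely, so I would present it as a direct topological argument rather than an application of Lemma \ref{th1}.
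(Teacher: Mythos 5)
Your proof is correct and is essentially the paper's own argument: both rest on Lemma \ref{bpvs}(\ref{a}) to get $\mathcal{C}(0)=\mathcal{D}(M)$ and on Lemma \ref{irrc} to identify this as $\overline{\{0\}}$, making the whole space irreducible. The only difference is that you spell out the standard fact that a space with a dense (generic) point cannot be disconnected, whereas the paper simply cites ``irreducibility implies connectedness.''
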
  

\begin{proof}
By Lemma \ref{bpvs}(\ref{a}), we have $\mathcal{D}(M)=\mathcal{C}(0)$, and since irreducibility implies connectedness, the claim now follows from Lemma \ref{irrc}. 
\end{proof}

\smallskip
\begin{corollary}
The structure spaces of proper, finitely generated, cyclic submodules are connected. 
\end{corollary} 

\subsection{Continuity}
Once we have structure spaces of submodules, it is natural to study continuous functions between them. Let $\phi\colon M\to M'$ be a module homomorphism. Likewise, for prime spectra (endowed with Zariski topologies) of rings, it is natural to expect a continuous Likewise, for prime spectra (endowed with Zariski topologies) of rings, it is natural to expect a continuous function $\phi_!\colon \mathcal{D}(M')\to \mathcal{D}(M)$ between ``similar type'' submodule spaces. It is indeed so, and we shall discuss that in the next proposition. However, in the case of rings, under a ring homomorphism, the inverse image of a prime spectrum is a prime spectrum, which even fails to hold for maximal ideals of rings. Therefore, we can expect the same problem when we deal with various classes of submodules of a module. So, we need to consider only those module homomorphisms for which such preservation holds. We call this the \emph{contraction property} of a module homomorphism.

To show a function is continuous, it suffices to show that the inverse image of subbasis closed sets are subbasis closed sets. Because  our topology is built of subbasis closed sets, we shall use this continuity criteria in our proofs.

\smallskip
\begin{proposition}\label{conmap}
Let $\mathcal{D}(M)$ be a structure space and $\phi\colon M\to M'$ be a module homomorphism having the contraction property with respect to $\mathcal{D}(M)$.   Let $N'\in\mathcal{D}(M').$ Define a function  $\phi_!\colon  \mathcal{D}(M')\to \mathcal{D}(M)$ by  $\phi_!(N)=\phi\inv(N)$. Then
\begin{enumerate}
		
\item \label{contxr} the function  $\phi_!$ is    continuous.
		
\item \label{shcs} If $\phi$ is  surjective, then the structure space $\mathcal{D}(M')$ is homeomorphic to the closed subspace $\mathcal{C}(\mathrm{ker}\,\phi)$ of the structure space $\mathcal{D}(M).$
		
\item \label{imde} The image  $\phi_!(\mathcal{D}(M'))$ is dense in $\mathcal{D}(M)$ if and only if $$\mathrm{ker}\,\phi\subseteq \bigcap_{L\in \dm}L;$$
\end{enumerate}
\end{proposition}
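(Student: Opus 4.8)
The plan is to run all three parts off a single computation describing how $\phi_!$ pulls back subbasis closed sets. First I would record that $\phi_!$ is well defined: this is exactly the role of the contraction hypothesis, which guarantees $\phi^{-1}(N')\in\mathcal{D}(M)$ whenever $N'\in\mathcal{D}(M')$. Next, for an arbitrary $N\in\mathcal{S}(M)$ I would verify the identity
\[
\phi_!^{-1}\!\left(\mathcal{C}(N)\right)=\mathcal{C}\!\left(\phi(N)\right),
\]
where on the right $\mathcal{C}$ is formed inside $\mathcal{D}(M')$ and $\phi(N)\in\mathcal{S}(M')$ is the image submodule. Indeed $N'\in\phi_!^{-1}(\mathcal{C}(N))$ says $N\subseteq\phi^{-1}(N')$, which is equivalent to $\phi(N)\subseteq N'$. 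Since a map into a structure space is continuous as soon as preimages of subbasis closed sets are subbasis closed, this identity is precisely part~(\ref{contxr}).

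For part~(\ref{shcs}) I would combine the above with the lattice correspondence theorem. When $\phi$ is surjective, $N'\mapsto\phi^{-1}(N')$ is an inclusion-preserving bijection from $\mathcal{S}(M')$ onto the submodules of $M$ containing $\ker\phi$, with inverse $L\mapsto\phi(L)$; restricting it I would claim it carries $\mathcal{D}(M')$ bijectively onto $\mathcal{C}(\ker\phi)=\{L\in\mathcal{D}(M)\mid\ker\phi\subseteq L\}$. Injectivity and the fact that the image lands in $\mathcal{C}(\ker\phi)$ are immediate; continuity is part~(\ref{contxr}); and to see that $\phi_!$ is closed onto the subspace I would compute, using inclusion-preservation in both directions (which needs surjectivity), that $\phi_!(\mathcal{C}(N'))=\mathcal{C}(\phi^{-1}(N'))\cap\mathcal{C}(\ker\phi)$, a subbasis closed set of the subspace. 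The delicate point here is \emph{surjectivity onto} $\mathcal{C}(\ker\phi)$: given $L\in\mathcal{C}(\ker\phi)$ one takes $N'=\phi(L)$ and uses $\phi^{-1}(\phi(L))=L$ (valid because $\ker\phi\subseteq L$), but one must know $\phi(L)\in\mathcal{D}(M')$. This forward preservation of the distinguished class under a surjection is the genuine obstacle for this part; it is a companion to the contraction property and holds for the classes of Definition~\ref{dcsm} by the correspondence theorem, so I would either invoke that class-by-class or promote it to a hypothesis.

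For part~(\ref{imde}) I would compute the closure of $Y:=\phi_!(\mathcal{D}(M'))$. Since $\ker\phi\subseteq\phi^{-1}(N')$ for every $N'$, we have $Y\subseteq\mathcal{C}(\ker\phi)$ and hence $\overline{Y}\subseteq\mathcal{C}(\ker\phi)$. This already settles one direction: if $\ker\phi\not\subseteq\bigcap_{L\in\mathcal{D}(M)}L$, some $L_0\in\mathcal{D}(M)$ omits $\ker\phi$, so $\mathcal{C}(\ker\phi)$ is a \emph{proper} closed set containing $Y$ and $Y$ is not dense. For the converse I would establish the reverse inclusion $\mathcal{C}(\ker\phi)\subseteq\overline{Y}$, so that $\ker\phi\subseteq\bigcap_L L$ forces $\overline{Y}=\mathcal{C}(\ker\phi)=\mathcal{D}(M)$. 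To place $L\in\mathcal{C}(\ker\phi)$ in $\overline{Y}$ one must meet every basic neighbourhood $\mathcal{D}(M)\setminus\bigcup_{i=1}^k\mathcal{C}(N_i)$ of $L$ by a point of $Y$; since $N_i\not\subseteq L$ together with $\ker\phi\subseteq L$ gives $N_i\not\subseteq\ker\phi$, the submodules $\phi(N_i)$ are nonzero, and any $N'\in\mathcal{D}(M')$ with $\phi(N_i)\not\subseteq N'$ for all $i$ yields such a point via the master identity. Here is where the main work sits: one needs the finite union $\bigcup_{i}\mathcal{C}(\phi(N_i))$ to be proper in $\mathcal{D}(M')$. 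When the zero submodule lies in $\mathcal{D}(M')$ this is automatic (take $N'=0$, which avoids every $\mathcal{C}(\phi(N_i))$ as $\phi(N_i)\neq 0$), and moreover the contraction property then gives $\ker\phi=\phi^{-1}(0)\in Y$ with $\ker\phi=\bigcap_L L$ the least element, so the single point $\ker\phi$ already satisfies $\overline{\{\ker\phi\}}=\mathcal{C}(\ker\phi)=\mathcal{D}(M)$ by Lemma~\ref{irrc}. Thus I expect the principal obstacle to be this finite-union reduction, that is, the passage from ``no proper subbasis closed set contains $Y$'' to ``$Y$ is dense'': it is resolved cleanly whenever $0\in\mathcal{D}(M')$ and otherwise calls on the irreducibility input of Corollary~\ref{spiir}.
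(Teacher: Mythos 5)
Your part (1) is exactly the paper's argument: the identity $\phi_!^{-1}(\mathcal{C}(N))=\mathcal{C}(\phi(N))$ (the paper writes $\mathcal{C}(\langle\phi(N)\rangle)$) together with the subbasis criterion for continuity. Your part (2) also follows the paper's route: correspondence theorem, injectivity from $\phi(\phi^{-1}(N'))=N'$, and closedness via the image formula $\phi_!(\mathcal{C}(N'))=\mathcal{C}(\phi^{-1}(N'))$ (your extra factor $\mathcal{C}(\mathrm{ker}\,\phi)$ is redundant since $\phi^{-1}(N')\supseteq\mathrm{ker}\,\phi$), pushed through intersections of finite unions using injectivity. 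The difference is one of care, and it is to your credit: the ``forward preservation'' obstacle you isolate --- that surjectivity of $\phi_!$ onto $\mathcal{C}(\mathrm{ker}\,\phi)$ requires $\phi(L)\in\mathcal{D}(M')$ for every $L\in\mathcal{C}(\mathrm{ker}\,\phi)$, which the contraction property does not supply --- is passed over in silence by the paper, whose proof of (2) simply asserts ``$\phi_!(N')\in\mathcal{C}(\mathrm{ker}\,\phi)$, and this implies $\mathrm{im}\,\phi_!=\mathcal{C}(\mathrm{ker}\,\phi)$.'' Your proposed repair (verify it class-by-class, where it does hold for the classes of Definition \ref{dcsm} under a surjection, or add it as a hypothesis) is the correct one.

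In part (3) your route genuinely differs from the paper's, and your suspicion about the hard direction is vindicated. The paper deduces density from the claim $\overline{\phi_!(\mathcal{C}(N'))}=\mathcal{C}(\phi^{-1}(N'))$, justifying the difficult inclusion by citing the identity $\phi^{-1}(\mathcal{C}(N'))=\mathcal{C}(\phi^{-1}(N'))$ --- i.e.\ the image formula of part (2), which needs surjectivity of $\phi$ and forward preservation, neither of which is a hypothesis of (3). Your neighbourhood-by-neighbourhood argument avoids that import, and the gap you flag cannot be closed in the stated generality, because the ``if'' direction of (3) is false without a supplementary hypothesis. Take $\mathcal{D}$ to be the class of maximal submodules, $M=\mathbb{Z}/p\oplus\mathbb{Z}/p$, $M'=\mathbb{Z}(p^\infty)\oplus\mathbb{Z}/p$, and $\phi=\iota\oplus\mathrm{id}$ with $\iota\colon\mathbb{Z}/p\hookrightarrow\mathbb{Z}(p^\infty)$ the canonical embedding. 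The unique maximal submodule of $M'$ is $\mathbb{Z}(p^\infty)\oplus 0$, which contracts to the maximal submodule $\mathbb{Z}/p\oplus 0$ of $M$, so the contraction property holds and $\mathrm{ker}\,\phi=0\subseteq\bigcap_{L\in\mathcal{D}(M)}L=0$; yet $\phi_!(\mathcal{D}(M'))$ is the single point $\mathbb{Z}/p\oplus 0$ in the $(p+1)$-point discrete space $\mathcal{D}(M)$, hence not dense. Note that here $N_1=\mathbb{Z}/p\oplus 0$ has $\phi(N_1)\neq 0$ and yet $\mathcal{C}(\phi(N_1))=\mathcal{D}(M')$, which is precisely the failure mode you predicted; note also that your irreducibility fallback via Corollary \ref{spiir} does not rescue this case, since the covering consists of a single non-proper subbasis closed set. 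So your proposal is correct exactly where the paper is correct, your $0\in\mathcal{D}(M')$ argument (with the least-element shortcut through Lemma \ref{irrc}) is a clean sufficient condition the paper lacks, and the two obstacles you name are defects of the proposition as stated, not of your proof.
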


\begin{proof}      
(1) Let $\mathcal{C}(N)$ be a subbasis closed set of a structure  space $\mathcal{D}(M).$ Since $$\phi_!\inv(\mathcal{C}(N)) =\{N'\in \mathcal{D}(M')\mid \phi(N)\subseteq N'\}=\mathcal{C}(\langle \phi(N)\rangle),$$ we have the desired continuity of the function $\phi_!$.     

(2) We observe that     
$\phi_!(N')\in \mathcal{C}(\mathrm{ker}\,\phi)$, and this implies  $\mathrm{im}\,\phi_!=\mathcal{C}(\mathrm{ker}\,\phi).$  
Since for all $N'\in \mathcal{D}(M'),$ \[\phi(\phi_!(N'))=N'\cap \mathrm{im}\,\phi=N',\] the function $\phi_!$ is injective. To show that $\phi_!$ is a closed function, first we claim that under a module homomorphism, inverse image of a subbasis closed set is a subbasis closed. Indeed, for any subbasis closed set  $\mathcal{C}(N)$ of  $\mathcal{D}(M')$, we have \[\phi_!(\mathcal{C}(N))=\phi\inv\{ L'\in \mathcal{D}(M')\mid N\subseteq   L'\}=\mathcal{C}(\phi\inv(N)).\]   If $\mathcal{K}$ is a closed subset of $\mathcal{D}(M')$, then there exist an index set $\Omega$ and submodules $\{N_{i\alpha}\}_{1\leqslant i \leqslant n_{\alpha}, \alpha \in \Omega}$ of $M'$ such that \[\mathcal{K}=\bigcap_{ \alpha \in \Omega} \left(\bigcup_{ i \,= 1}^{ n_{\alpha}} \mathcal{C}(N_{ i\alpha})\right).\] Then, applying $\phi_!$ on $\mathcal{K},$ we have \[\phi_!(\mathcal{K})=\phi\inv \left(\bigcap_{ \alpha \in \Omega} \left(\bigcup_{ i = 1}^{ n_{\alpha}} \mathcal{C}(N_{ i\alpha})\right)\right)=\bigcap_{ \alpha \in \Omega} \bigcup_{ i = 1}^{ n_{\alpha}} \mathcal{C}(\phi\inv(N_{ i\alpha})),\] a closed subset of  $\mathcal{D}(M).$ Since  $\phi_!$ is   continuous by (1), this completes the proof.

(3) We claim that $\overline{\phi_!(\mathcal{C}(N'))}=\mathcal{C}(\phi\inv(N')),$ for all $N'\in \mathcal{S}(M').$ To this end, let $L\in \phi_!(\mathcal{C}(N')).$ From this, we obtain $\phi(L)\in \mathcal{C}(N'),$ or, equivalently, $L\in \mathcal{C}(\phi\inv(N')).$ From the fact that $\phi\inv(\mathcal{C}(N'))=\mathcal{C}(\phi\inv(N')),$ we then obtain the other half of the inclusion. Since $$\overline{\phi_!(\mathcal{D}(M'))}=\mathcal{C}(\phi\inv(0))=\mathcal{C}(\mathrm{ker}\,\phi),$$ the closed subspace $\mathcal{C}(\mathrm{ker}\,\phi)$ is equal to $\mathcal{D}(M)$ if and only if $\mathrm{ker}\,\phi\subseteq \cap_{L\in \mathcal{D}(M)}L.$ 
\end{proof}  
  
\smallskip
\begin{corollary}
The structure space $\mathcal{D}(M/N)$ is homeomorphic to the closed subspace
$\mathcal{C}(N)$ of $\mathcal{D}(M)$.
\end{corollary}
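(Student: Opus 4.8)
The plan is to exhibit the corollary as a direct specialization of Proposition \ref{conmap}(\ref{shcs}). First I would take $M'=M/N$ and let $\phi=\pi\colon M\to M/N$ be the canonical projection. This map is a surjective module homomorphism with $\ker\pi=N$, so that $\C(\ker\pi)=\C(N)$. Once we know that $\pi$ has the contraction property with respect to $\dm$, Proposition \ref{conmap}(\ref{shcs}) applies verbatim and yields a homeomorphism between $\mathcal{D}(M/N)$ and the closed subspace $\C(\ker\pi)=\C(N)$ of $\dm$, which is exactly the assertion.

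The only point that requires genuine verification is therefore the contraction property: for every $N''\in\mathcal{D}(M/N)$ the contraction $\pi\inv(N'')$ must again lie in $\dm$. Here I would invoke the correspondence theorem, which says that $N''\mapsto\pi\inv(N'')$ is an inclusion-preserving bijection from the submodules of $M/N$ onto the submodules of $M$ containing $N$; combined with the third isomorphism theorem, this gives $M/\pi\inv(N'')\cong(M/N)/N''$, so that a submodule and its contraction have isomorphic quotient modules.

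With this dictionary in hand, the preservation of membership in $\dm$ splits according to the nature of the defining condition in Definition \ref{dcsm}. The conditions of prime type (maximal, prime, primary, minimal prime) depend only on the isomorphism class of the quotient module, and are thus preserved by the isomorphism $M/\pi\inv(N'')\cong(M/N)/N''$; the conditions of lattice type (semiprime, radical, extraordinary, strongly irreducible, irreducible, completely irreducible) depend only on the order structure of the submodule lattice, and are preserved by the inclusion-preserving bijection above. In each case $\pi\inv(N'')\in\dm$, which settles the contraction property and completes the argument.

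The main obstacle I anticipate is not topological---continuity, injectivity, and closedness are already packaged inside Proposition \ref{conmap}(\ref{shcs})---but rather the careful bookkeeping of the contraction property across the entire list of classes. The delicate cases are the finitely generated and cyclic submodules: since $\pi\inv(N'')$ always contains $N$, it need not be finitely generated when $N$ is not, so for those classes the statement should be read as applying when $N$ itself is finitely generated. For all the prime-type and lattice-type classes, however, the contraction property holds unconditionally, and the corollary follows.
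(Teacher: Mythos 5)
Your reduction---taking $M'=M/N$, $\phi=\pi$ the canonical surjection with $\ker\pi=N$, and invoking Proposition \ref{conmap}(\ref{shcs})---is exactly the paper's intended argument: the corollary is stated as an immediate specialization of that proposition, and your instinct that the contraction property is the one hypothesis requiring verification is correct. Your treatment of maximal, prime, primary, semiprime and radical submodules is sound (contraction of a prime is prime, contraction commutes with intersections, and primes of $M/N$ correspond exactly to primes of $M$ containing $N$). Irreducible and completely irreducible also work, though for a different reason than the one you give: if $P=L\cap K$ then $L,K\supseteq P\supseteq N$, so that condition genuinely is local to the interval $[N,M]$ and is preserved by the correspondence theorem.

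The gap is in your blanket claims for the remaining classes. Strong irreducibility of $\pi\inv(N'')$ quantifies over \emph{all} pairs of submodules $L,K$ of $M$ with $L\cap K\subseteq\pi\inv(N'')$, and such $L,K$ need not contain $N$; the correspondence theorem identifies $\mathcal{S}(M/N)$ only with the interval $[N,M]$ and says nothing about submodules outside it, so the condition is not ``order-theoretic'' in the sense you need. Concretely, let $k$ be a field, $M=k^2$, and $N$ a one-dimensional subspace. The zero submodule of $M/N\cong k$ is strongly irreducible and extraordinary, but its contraction $N$ is neither: any two distinct lines $L,K\neq N$ (which are prime, hence semiprime, since every nonzero scalar acts invertibly) satisfy $L\cap K=0\subseteq N$ while $L\not\subseteq N$ and $K\not\subseteq N$. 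In fact no proper submodule of $k^2$ is strongly irreducible, so for that class $\mathcal{C}(N)=\emptyset$ while $\mathcal{D}(M/N)$ is a singleton: the corollary itself fails there, so no bookkeeping can make your claim of ``unconditional'' contraction for these classes true. The same problem afflicts minimal and minimal prime submodules, which you filed under ``prime type'': minimality is not an invariant of the quotient $M/P$, and as soon as $N\neq 0$ the contraction $\pi\inv(N'')$ properly contains the nonzero submodule $N$ and cannot be minimal. Finally, note that Proposition \ref{conmap}(\ref{shcs}) needs $\phi_!$ to map $\mathcal{D}(M/N)$ \emph{onto} $\mathcal{C}(N)$, so one must also check the reverse preservation---that $L\in\mathcal{D}(M)$ with $L\supseteq N$ implies $L/N\in\mathcal{D}(M/N)$; this is easy for the classes where your argument works, but it is a hypothesis distinct from the contraction property and should be stated.
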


\smallskip
\begin{cremark}
We conclude with some programs for further study.

(A) The present work can be generalized further to study the topological properties of various classes of subsemimodules of semimodules. With some routine changes of terminologies and assumptions, one may extend some of the results obtained in \cite{HPH21} for the prime spectra of semimodules to distinguished classes of subsemimodules, similar to that which we have considered here. In Table \ref{tabB}, we provide some examples of these generalizations.

\begin{table}[H]
\begin{center}
\begin{tabular}{|c|c|}
\hline
\small\textbf{\cite{HPH21}} & \small\textbf{Present paper}\\
\hline\hline 
\small Lemma 3.1 &\small Lemma 2.1 \\
\hline
\small Theorem 3.1 &\small Proposition 2.8\\
\hline
\small Theorem 3.2 &\small Theorem 2.11\\
\hline
\small Theorem 3.3&\small Lemma 2.9 \\
\hline
\small Corollary 3.1& \small Theorem 2.14\\
\hline 
\small Theorem 3.5 &\small Theorem 2.2\\
\hline
\end{tabular}
\end{center}
\caption{Modules vs. Semimodules}
\label{tabB}
\end{table}

(B) In Theorem \ref{pss} and Corollary \ref{cor}, we have seen examples of spectral structure spaces. With some additional assumptions on modules, it would be worth it to obtain further examples of structure spaces that are spectral.

(C) Due to the lack of subtractivity in semirings, the theory of ideals is significantly different than that of rings. To minimize this gap, the notion of \emph{k-ideals} (also known as \emph{subtractive} ideals) has been introduced in \cite{H58}, which further generalized it for semimodules, called \emph{subtractive} submodules (see \cite{G99}). To best of author's knowledge, no topological studies have been made on this special class of subsemimodules.

\end{cremark}

\smallskip
\providecommand{\bysame}{\leavevmode\hbox to3em{\hrulefill}\thinspace}
\providecommand{\MR}{\relax\ifhmode\unskip\space\fi MR }
\providecommand{\MRhref}[2]{%
\href{http://www.ams.org/mathscinet-getitem?mr=#1}{#2}
}
\providecommand{\href}[2]{#2}

\end{document}